\documentclass[10pt]{amsart}
\pdfoutput=1
\usepackage{amsmath}
\usepackage{amsfonts}
\usepackage{amssymb}
\usepackage{amsthm, enumerate, enumitem}
\usepackage{hyperref}
\usepackage{comment}
\usepackage{float}
\usepackage{youngtab}
\usepackage{ytableau}
\usepackage{rotating}
\usepackage{subcaption}
\usepackage{xcolor}
\usepackage{bbding}
\usepackage{tikz}
\usepackage{tikz-cd}
\usepackage[paper=a4paper, margin=3.5cm]{geometry}

%%%%%%%%here I comment
%\newtheorem{theorem}{Theorem}
%\numberwithin{theorem}{section}
%\newtheorem{propo}[theorem]{Proposition}
%\newtheorem{fact}[theorem]{Fact}
%\newtheorem{corollary}[theorem]{Corollary}
%\newtheorem{obs}[theorem]{Observation}
%\newtheorem{mylemma}[theorem]{Lemma}
%\newtheorem{conjecture}[theorem]{Conjecture}
%\newtheorem{problem}[theorem]{Problem}
%\theoremstyle{definition}
%\newtheorem{definition}[theorem]{Definition}
%\newtheorem{example}[theorem]{Example}
%\newtheorem{question}[theorem]{Question}
%\newtheorem{answer}[theorem]{Answer}

%\theoremstyle{remark}
%\newtheorem{remark}[theorem]{Remark}
%\numberwithin{equation}{section}

%\newtheorem{citethm}[theorem]{Theorem}
%\newtheorem{citedfn}[theorem]{Definition}
%%%%%here I ended the comment

%%%%%%%%%%adding here

%------    GENERAL MACROS    -----
%
% Standard rings and fields, affine and projective space
%
               % the font for N,Z,Q,R,C

\def\ZZ{{\NZQ Z}}
\def\RR{{\NZQ R}}

\def\PP{{\NZQ P}}

%
%------------------------------------------------
% Symbols in "Fraktur"
%
\def\frk{\frak}               % font for "Fraktur"

\def\Phi{{\frk n}}
\def\Phi{{\frk N}}
%
%------------------------------------------------

\def\fF{{\mathcal F}}

% Small letters in bold
%

%
\def\opn#1#2{\def#1{\operatorname{#2}}} % to make operators
%------------------------------------------------
% Numerical invariants of rings, ideals, and modules
%
\opn\chara{char} \opn\length{\ell} \opn\pd{pd} \opn\rk{rk}
\opn\projdim{proj\,dim} \opn\injdim{inj\,dim} \opn\rank{rank}
\opn\spn{span}\opn\Seg{Seg}
\opn\depth{depth} \opn\grade{grade} \opn\height{height}
\opn\embdim{emb\,dim} \opn\codim{codim}

\opn\Tr{Tr} \opn\bigrank{big\,rank}
\opn\superheight{superheight}\opn\lcm{lcm}
\opn\trdeg{tr\,deg}%\emph{
\opn\reg{reg} \opn\lreg{lreg} \opn\ini{in} \opn\lpd{lpd}
\opn\size{size}\opn\bigsize{bigsize}
\opn\cosize{cosize}\opn\bigcosize{bigcosize}
\opn\sdepth{sdepth}\opn\sreg{sreg}
\opn\link{link}\opn\fdepth{fdepth} \opn\trdeg{trdeg} \opn\mod{mod}
\opn\spann{span}
%------------------------------------------------
% Divisors
%
\opn\div{div} \opn\Div{Div} \opn\cl{cl} \opn\Cl{Cl}
%
%------------------------------------------------
% Subsets of the spectrum of a ring
%
\opn\Spec{Spec} \opn\Supp{Supp} \opn\supp{supp} \opn\Sing{Sing}
\opn\Ass{Ass} \opn\Min{Min}\opn\Mon{Mon} \opn\dstab{dstab} \opn\astab{astab}
\opn\Syz{Syz}
%
%------------------------------------------------
% Standard operations on ideals and modules
%
\opn\Ann{Ann} \opn\Rad{Rad} \opn\Soc{Soc} \opn\Aut{Aut}
%
%------------------------------------------------
% Linear algebra and homology, endo- and automorphisms
%
\opn\Im{Im} \opn\Ker{Ker} \opn\Coker{Coker} \opn\Am{Am}
\opn\Hom{Hom} \opn\Tor{Tor} \opn\Ext{Ext} \opn\End{End}
\opn\Aut{Aut} \opn\id{id}

\opn\nat{nat}
\opn\pff{pf}%   \pf exists already
\opn\Pf{Pf} \opn\GL{GL} \opn\SL{SL} \opn\mod{mod} \opn\ord{ord}
\opn\Gin{Gin} \opn\Hilb{Hilb}\opn\sort{sort}
\opn\S{S} \opn\dim{dim} \opn\supp{supp}\opn\trdeg{trdeg}\opn\sort{sort}
%
%------------------------------------------------
% Convexity
%
\opn\aff{aff} \opn\con{conv} \opn\relint{relint} \opn\st{st}
\opn\lk{lk} \opn\cn{cn} \opn\core{core} \opn\vol{vol}
\opn\link{link} \opn\star{star}\opn\lex{lex}
\opn\conv{conv} \opn\Ehr{Ehr}\opn\Pic{Pic}
\opn\Conv{Conv}
%------------------------------------------------
% Graded rings and Rees algebras
\opn\gr{gr}

%
%------------------------------------------------
% Polynomials and power series
%

\def\pot#1#2{#1[\kern-0.28ex[#2]\kern-0.28ex]}

%
%------------------------------------------------
% Direct and inverse limits
%
\opn\dirlim{\underrightarrow{\lim}}
\opn\inivlim{\underleftarrow{\lim}}
%
%
% Names with a meaning
%

%
%------------------------------------------------
%

\def\Implies{\ifmmode\Longrightarrow \else
        \unskip${}\Longrightarrow{}$\ignorespaces\fi}
\def\implies{\ifmmode\Rightarrow \else
        \unskip${}\Rightarrow{}$\ignorespaces\fi}
\def\iff{\ifmmode\Longleftrightarrow \else
        \unskip${}\Longleftrightarrow{}$\ignorespaces\fi}

\let\:=\colon

\newtheorem{Theorem}{Theorem}[section]
 \newtheorem{Lemma}[Theorem]{Lemma}
 \newtheorem{Corollary}[Theorem]{Corollary}
 
 \newtheorem{Remark}[Theorem]{Remark}

 \newtheorem{Example}[Theorem]{Example}
 
 \newtheorem{Definition}[Theorem]{Definition}

%%%%%%%%ending here

\def\RR{\mathbb{R}}
\def\ZZ{\mathbb{Z}}
\def\PP{\mathbb{P}}

\def\fC{\mathcal{C}}

\begin{document}
 \title {Phylogenetic degrees for Jukes-Cantor model}
%MSC{62R01, 14M17, 14C17, 14N10}
%\subjclass[2010]{52B20, 14M25}
\keywords {Phylogenetic tree, Jukes-Cantor model, algebraic degree of a variety, toric variety, polytope, volume}
 
 \author{Rodica Andreea Dinu}
\address{%
	University of Konstanz, Fachbereich Mathematik und Statistik, Fach D 197 D-78457 Konstanz, Germany, and Simion Stoilow Institute of Mathematics of the Romanian Academy, Calea Grivitei 21, 010702, Bucharest, Romania}
	\email{rodica.dinu@uni-konstanz.de}

\author{Martin Vodi\v{c}ka}
\address{Šafárik University, Faculty of Science, Jesenná 5, 04154 Košice, Slovakia}
	\email{martin.vodicka@upjs.sk}

\maketitle

 \begin{abstract}
Jukes-Cantor model is one of the most meaningful statistical models from a biological perspective. We are interested in computing the algebraic degrees for phylogenetic varieties, which we call \textit{phylogenetic degrees}, associated to the Jukes-Cantor model and any tree. As these varieties are toric, their geometry is hidden in the associated polytopes. For this reason, we provide two different combinatorial approaches to compute the volume for these polytopes. 
 \end{abstract}

 \maketitle

\section{Introduction}

One of the main goals of phylogenetics is to reconstruct historical relationships between species by examining current traits and placing their common ancestors on a tree, usually called \textit{phylogenetic tree}. The living species are represented at the leaves of the tree and the inner nodes represent ancestral sequences.

The study of phylogenetics, and in particular of evolutionary biology, has given rise to some fascinating new mathematical problems. There are many connections to different areas of mathematics, e.g., algebraic geometry \cite{bw, erss}, mathematical physics through conformal blocks \cite{manon}, and combinatorics \cite{matJCTA, RM-claw}. In the articles \cite{AR1, AR2}, Allman and Rhodes introduced algebraic varieties coming from phylogenetics which are called \textit{phylogenetic varieties}. In broad terms, phylogenetic varieties associated to a Markov model and a tree are the smallest algebraic varieties containing the set of joint distributions of nucleotides at the leaves of the tree. Its biological significance stems from the fact that the theoretical joint distribution of the nucleotides of the current species will always be represented by a point in this phylogenetic variety, regardless of the statistical parameters. The construction of these varieties will be presented in this article. In addition, we will assume that an abelian group $G$ (which will be, in fact, $\ZZ_2$ in our case) acts transitively and freely on the set of states and this fact gives rise to the so-called \textit{group-based models} and their associated phylogenetic group-based varieties. For more details regarding group-based models and their geometrical properties, the reader may consult \cite{mateusz, mateusztor, martinko, RM}. Readers interested in more algebraic varieties arising from statistical models may consult the monograph on algebraic statistics \cite{sethbook}.

We are interested in studying the algebraic degrees for group-based phylogenetic varieties. These degrees will be called \textit{phylogenetic degrees}. So far, in the literature, there are known concrete formulas for these degrees when the group is one of $\{\ZZ_2, \ZZ_2\times \ZZ_2, \ZZ_3\}$ and the tree is an $n$-claw tree (i.e. a tree which has exactly one inner vertex of degree $n$, which is also called \textit{star}), see \cite{RM-claw}. We refer also to the paper \cite{examples} for some computational results for small trees.

In this article, we investigate the phylogenetic degrees of the projective variety associated to the Jukes-Cantor model (i.e. when $G=\ZZ_2$) and any tree. Together with the 3-Kimura parameter model, this is one of the most meaningful statistical models in biology. The Jukes-Cantor model calculates the likelihood of substitution between two states (nucleotides were used in the original model, but codons or amino acids can also be used as a substitute). Interesting results known in the literature about the phylogenetic varieties associated to the Jukes-Cantor model can be found it \cite{ss, erss, bw,manon, RM-claw, matem}. The phylogenetic variety associated to the Jukes-Cantor model and any tree, which will be denoted by $X_T$, is a projective toric variety, \cite{ss}. Hence, the problem of computing the phylogenetic degrees can be translated into computing the volume for their associated polytope, $P_T$.  For a monograph on triangulations and computing volumes of polytopes, we refer to \cite{triangbook}. 

%some idea how we proceed

We present here the structure of the paper. In Section~\ref{prel} we introduce phylogenetic group-based varieties and their associated polytopes, together with notations that will be used further. In Section~\ref{Qr} we work with forests $F$ with edge sets $E(F)$. We introduce a family of polytopes $Q_F$ obtained by cutting the unit cube $[0,1]^{E(F)}$ by suitable hyperplanes corresponding to the inner vertices of the forest. We provide two recursive formulas for computing the volume of polytopes $Q_F$. One is given in Lemma~\ref{P-formula} and we illustrate on concrete examples such as the star and double-star graphs, and the other one is given in Corollary~\ref{P-splitting-formula}. 
%Since both of the stars in the Corollary~\ref{P-splitting-formula} formula on the right-hand side have the same number of edges, we may create a forest in which every tree is a star by methodically moving the leaves in the direction of a single vertex.
Based on the construction of polytopes $Q_F$ and Lemma~\ref{beautiful} which is of high importance, we define a function $r(F)$ given by taking the sum of all the volumes of polytopes $Q_F$ for which we contract all the inner edges (i.e. the edges connecting two inner vertices) and we transform them in leaves. In Theorem~\ref{V-formula}, we express the volume of $P_T$ using the function $r(F)$. This gives us a three-step process of computing the volume of $P_T$. First, we compute volumes of polytopes $Q_F$, then the numbers $r(F)$ and finish with the volume of $P_T$.  In Corollary~\ref{Vstar} and Example~\ref{example-V} we provide suitable examples for his approach. In Section~\ref{anotherformula}, we provide another recursive formula, in Theorem~\ref{v-splitting}, for computing the volumes of $P_T$, this time without the use of the polytopes $Q_F$ and the function $r(F)$. Sometimes it is fast to compute the volume using this formula, and we illustrate it for some specific graphs, see Corollary~\ref{easy-Vformula} and Corollary~\ref{easy_formula_2}.

\section{Preliminaries}\label{prel}
We introduce the notation and preliminaries that will be used in this article. For more details, we refer to \cite{ss, erss, h-rep, RM}.

\subsection{Phylogenetic trees}\label{phylotrees}
We introduce the algebraic variety associated to a phylogenetic tree and a group, and its defining polytope. The construction of this variety with all the details may be found in the seminal paper \cite{erss}. The reader may also consult \cite{bw}.

 Let $T$ be a rooted tree with a set of vertices $V=V(T)$ and a set of edges $E=E(T)$. We direct the edges of $T$ away from the root. A vertex $v$ is called a {\it leaf} if its valency is 1, otherwise it is called a {\it node}. We denote the set of leaves in $T$ by $L$ and the set of nodes in $T$ by $N$. Let $S=\{\alpha_0, \alpha_1, \dots\}$ be a finite set, that is usually called the set of states, and let $W$ be a finite-dimensional vector space spanned freely by $S$. Let $\widehat{W}$ be a subspace of $W\otimes W$. Any element of $\widehat{W}$ can be represented as a matrix in the basis corresponding to $S$ and this matrix is called a {\it transition matrix}. We associate to any vertex $v\in V$ a complex vector space $W_v \simeq W$, and we associate to any edge $e\in E$ a subspace $\widehat{W}_e \subset W_{v_1(e)}\otimes W_{v_2(e)}$, $\widehat{W}_e \simeq \widehat{W}$, where $e$ is the directed edge from the vertex $v_1(e)$ to $v_2(e)$.
We call the tuple $(T, W, \widehat{W})$ a \textit{phylogenetic tree}.

 \subsection{A variety associated to a group-based model.}
We define the following spaces:
\[
W_V=\bigotimes_{v\in V} W_v,\;\;  W_L=\bigotimes_{l\in L} W_l,\;\;  \widehat{W}_E= \bigotimes_{e\in E} \widehat{W}_e.
\]
The space $W_V$ is called the space of all possible states of the tree, $W_L$ is called the space of states of leaves, and $\widehat{W}_E$ is the parameter space.

\begin{Definition} [{\cite[Construction 1.5]{bw}}]
We consider a linear map $\widehat{\psi}: \widehat{W}_E \rightarrow W_V$ whose dual is defined as
\[
\widehat{\psi}^{*}(\bigotimes_{v\in V} \alpha_v^{*})= \bigotimes_{e\in E}(\alpha_{v_1(e)}\otimes \alpha_{v_2(e)})^{*}_{| \widehat{W}_e},
\]
where $\alpha_v$ is an element of $S$.
\end{Definition}
The map $\widehat{\psi}$ associates to a given choice of matrices the probability distribution on the set of all possible states of vertices of the tree. Its associated multi-linear map $\widetilde{\psi}: \prod_{e\in E}\widehat{W}_e \rightarrow W_V$ induces a rational map of projective varieties
\[
\psi: \prod_{e\in E}(\PP(\widehat{W}_e))\dashrightarrow \PP(W_V).
\]
%We call $ \overline{\Im \psi}$ {\it the complete geometric model of} $(T, W, \widehat{W})$. 
We consider the map $\pi: W_V \rightarrow W_L$ which is defined as
\[
\pi=(\otimes_{v\in L}\id_{W_v})\otimes (\otimes_{v\in N} \sigma_{W_v}),
\]
with $\sigma=\sum \alpha_i^{*} \in W^{*}$ which sums up all the coordinates. The map $\pi$ sums up the probabilities of the states of vertices which are different only in nodes.

\begin{Definition}\label{variety}
The image of the composition $\pi \circ \psi$ is an affine subvariety in $W_L$ and it is called the affine variety of a phylogenetic tree $(T, W, \widehat{W})$. By $X_T$ we denote its underlying projective variety in $\PP(W_L)$.
\end{Definition}

%The choice of $\widehat{W} \subset W\otimes W$ gives rise to a phylogenetic tree and its model.
We will assume that an abelian group $G$ acts transitively and freely on the set $S$. We define $\widehat{W}$ to be the set of all fixed points of the $G$ action on $W\otimes W$. In this case, the space $\mathcal{M}$ of transition matrices is given as a subspace invariant under the action of the group $G$. This choice of $\widehat{W}$ leads to the so-called {\it group-based models}. For more details about these models, the reader may consult \cite{ss, erss} and \cite{mateusztor}. The variety from Definition~\ref{variety} is called the \textit{group-based phylogenetic variety} and it depends on the choice of the abelian group $G$.

\begin{Example}
If $G=\ZZ_2$, then we obtain the (two-state) Jukes-Cantor model, known in the literature also as the Cavender-Farris-Neyman model. The elements of $\widehat{W}$ represented as matrices are of the following form: 
\[
\begin{pmatrix}
a & b \\
b & a 
\end{pmatrix}.
\]
\end{Example}

In this article, we will consider only the Jukes-Cantor model and its associated group-based phylogenetic variety will be denoted by $X_T$.

\subsection{The defining polytope.}
By more general results due to Hendy and Penny \cite{HP} and, later, Erdős, Steel, and Székely \cite{sz}, the group-based phylogenetic varieties are projective toric varieties. In particular, our phylogenetic variety associated to the Jukes-Cantor model is a projective toric variety. Also, it is known that the geometric properties of toric varieties are hidden in those of their associated polytopes. For this reason, we will work with the defining polytope of the group-based phylogenetic variety associated to the Jukes-Cantor model, and we denote it by $P_T$.

We introduce some notation that will be used in the next sections when working with trees and polytopes $P_{T}\subset\mathbb R^{|E|}$. For a tree $T$ we denote the set of its edges by $E(T)$ and the set of inner vertices by $I(T)$. For any inner vertex $v\in I(T)$ we will denote by $\mathcal{N}_T (v)$ the set of edges that are adjacent to $v$, and, by $IE(T)$ the set of inner edges, i.e. the edges connecting two inner vertices. We will denote by $S_n$ the star graph, i.e. the tree with $n$ edges and exactly one inner vertex (also called in the literature as the \textit{$n$-claw tree}). We will denote by $S_{m,n}$ the double-star graph, i.e the tree with exactly two inner vertices with degrees $m+1$ and $n+1$.

For a set $Y$, we denote by $\mathcal{P}(Y)$ the set of all subsets of $Y$ and by $\mathcal{P}_{odd}(Y)$ the set of all subsets of $Y$ having odd cardinality. We label the coordinates of a point $x\in\mathbb R^{|E|}$ by $x_e$, where $e\in |E|$ corresponds to an edge of the tree.

The vertex and facet descriptions of the polytopes $P_{T}$ are known and, even more general results, may be found in \cite{bw, mateusz, ss}.
Here we recall these descriptions:

%We denote by $[n]:=\{1,2, \dots, n\}$. Also, for a set $Y$, we denote by $\mathcal{P}(Y)$ the set of all subsets of $Y$ and by $\mathcal{P}_{odd}(Y)$ the set of all subsets of $Y$ having odd cardinality. We label the coordinates of a point $x\in\mathbb R^{2|L|}$ by $x_g^j$, where $j\in [|L|]$ corresponds to an edge of the tree, and $g\in \ZZ_2$ corresponds to a group element. For any point $x\in \mathbb Z_{\ge 0}^{2|L|}$ we define its $\ZZ_2$-presentation as an $2$-tuple $(G_1,G_2)$ of multisets of elements of $\ZZ_2$. Every element $g\in \ZZ_2$ appears exactly $x_g^j$ times in the multiset $G_j$. We denote by $x(G_1,G_2)$ the point with the corresponding $\ZZ_2$-presentation.

\begin{Theorem} 
The vertices of the polytope $P_{T}$ associated to the group $\ZZ_2$ and the tree $T$ are exactly the points from the following set:

$$\{x\in\{0,1\}^{|E(T)|}: \forall v\in I(T): 2\mid\sum_{e\in \mathcal N_T(v)} x_e\}$$.

In other words, in a vertex of $P_T$ the sum of edge coordinates around each inner vertex must be even.

Let $L_{T}$ be the lattice generated by the vertices of $P_{T}$. Then
 $$L_{T}=\{x\in \mathbb Z^{|E(T)|}: \forall v\in I(T): 2\mid\sum_{e\in \mathcal N_T(v)} x_e \}.$$
Moreover, the lattice $L_T$ is the sublattice of $Z^{|E|}$ of index $2^{|I(T)|}$.
\end{Theorem}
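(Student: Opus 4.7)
The plan is to establish the three assertions—vertex description, lattice characterization, and index formula—in sequence. Write $M:=\{x\in\ZZ^{|E(T)|}:\forall v\in I(T),\ 2\mid\sum_{e\in\mathcal{N}_T(v)}x_e\}$ for the sublattice appearing in the statement. The vertex description I would take as a direct consequence of the Fourier parametrization of group-based models (cf.\ \cite{ss, erss, bw}): the vertices of $P_T$ are in bijection with assignments of characters $\chi:E(T)\to\widehat{G}$ satisfying $\prod_{e\in\mathcal{N}_T(v)}\chi_e=1$ at every inner vertex $v$. For $G=\ZZ_2$, identifying $\widehat{G}$ with $\{0,1\}$ under addition mod $2$ turns this multiplicativity into the parity condition $\sum_{e\in\mathcal{N}_T(v)}x_e\equiv 0\pmod{2}$, so the vertices are exactly the $\{0,1\}$-points in $M$. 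The inclusion $L_T\subseteq M$ is then immediate, as every vertex lies in $M$ and $M$ is a sublattice.

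For $M\subseteq L_T$ the crucial claim is that $2\mathbf{1}_e\in L_T$ for every edge $e$, where $\mathbf{1}_e$ is the standard basis vector of $e$. Assuming inner vertices have valency $\geq 3$, I would write $e=(u,w)$ and pick leaves $\ell_1,\ell_3$ in two distinct branches at $u$ (away from $e$) and $\ell_2,\ell_4$ similarly at $w$. With these choices, the $u$-to-$\ell_1$ and $u$-to-$\ell_3$ paths are edge-disjoint (and likewise at $w$), so the three vertex indicators $v_{12},v_{34}$ (for the paths $\ell_1\to\ell_2$ and $\ell_3\to\ell_4$, both passing through $e$) and $v_{13,24}$ (for the disjoint union of the $\ell_1\to\ell_3$ and $\ell_2\to\ell_4$ paths, neither using $e$) satisfy the identity $v_{12}+v_{34}-v_{13,24}=2\mathbf{1}_e$. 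Pendant edges and degree-$2$ inner vertices require small ad hoc modifications, most cleanly by first contracting any degree-$2$ inner vertex. Granted this claim, given $x\in M$, its coordinate-wise mod-$2$ reduction $y\in\{0,1\}^{|E(T)|}$ still satisfies the parity condition, hence is a vertex of $P_T$; then $y\in L_T$ and $x-y=\sum_e 2k_e\mathbf{1}_e\in L_T$, forcing $x\in L_T$ and $L_T=M$.

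The index formula then reduces to surjectivity of the homomorphism $\varphi:\ZZ^{|E(T)|}\to\FF_2^{|I(T)|}$ whose $v$-th coordinate is $\sum_{e\in\mathcal{N}_T(v)}x_e\bmod 2$; its kernel is exactly $M=L_T$. Surjectivity is equivalent to $\FF_2$-linear independence of the characteristic vectors $\mathbf{1}_{\mathcal{N}_T(v)}\in\FF_2^{|E(T)|}$, $v\in I(T)$: a vanishing relation $\sum_{v\in S}\mathbf{1}_{\mathcal{N}_T(v)}\equiv 0\pmod{2}$ forces every pendant edge to avoid $S$ and every inner edge to have both or neither endpoint in $S$, and peeling a leaf of the subgraph of $T$ induced on $I(T)$ drives $S=\emptyset$. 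This yields $[\ZZ^{|E(T)|}:L_T]=2^{|I(T)|}$. The main obstacle I anticipate is the low-valency / pendant case in proving $2\mathbf{1}_e\in L_T$: the telescoping identity above genuinely requires two independent branches at each endpoint of $e$, so degenerate configurations need a separate argument, presumably by induction after contracting any degree-$2$ inner vertices.
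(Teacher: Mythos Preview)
The paper does not supply its own proof of this theorem: it is presented as a recalled result, with references to \cite{bw, mateusz, ss}, so there is no in-paper argument to compare against. Your proposal is a sound direct proof. The telescoping identity $v_{12}+v_{34}-v_{13,24}=2\mathbf{1}_e$ works as claimed (the paths $\ell_1\!\to\!\ell_3$ and $\ell_2\!\to\!\ell_4$ are edge-disjoint since $e$ separates them, so their union indicator is a $\{0,1\}$-vector with even incidence at every inner vertex, hence a genuine vertex of $P_T$). One sharpening of your caveat: for trees admitting a degree-$2$ inner vertex the lattice assertion can genuinely fail --- e.g.\ for the path on three vertices the vertices of $P_T$ are $(0,0)$ and $(1,1)$, spanning only $\ZZ\cdot(1,1)\subsetneq M$ --- so contracting such vertices is necessary, not merely convenient; in the paper this regime is degenerate anyway since $P_T$ is then not full-dimensional. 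For surjectivity of $\varphi$, a quicker alternative to your peeling argument is to fix any leaf $\ell$ and observe that the indicator of the path from $\ell$ to an inner vertex $v$ maps under $\varphi$ to the basis vector $\delta_v$.
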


\begin{Example}
Let us consider the double-star $S_{2,2}$. Then
 \begin{align*}
P_{T}=\Conv(\{&(0,0,0,0,0),(1,1,0,0,0),(0,0,0,1,1),(1,1,0,1,1),\\
&(1,0,1,1,0),(1,0,1,0,1),(0,1,1,1,0),(0,1,1,0,1)\}).
\end{align*}
The first three coordinates correspond to the edges adjacent to the first inner vertex and the last three coordinates correspond to the edges adjacent to the second inner vertex. Thus, the third coordinate corresponds to the inner edge.

The lattice generated by the vertices of the polytope $P_{T}$ is
\begin{align*}
L_{T}&=\{x\in\mathbb Z^5: 2\mid x_1+x_2+x_3,\ x_3+x_4+x_5\}.
\end{align*}
\end{Example}

 It is known that the facet representation of $P_T$ is the following one:

\begin{Theorem}[{\cite{bw}}]\label{facetdescription_Z2}
The facet description of the polytope $P_{T}$ is:
\begin{itemize}
\item $0\le x_e\leq 1$ for all $e\in E(T)$,
\item For all $v\in I(T)$ and for all $A\in\mathcal{P}_{odd}(\mathcal{N}_T(v))$:
\[
\sum_{e\in\mathcal{N}_T(v)\setminus A} x_e - \sum_{e\in A} x_e\ge 1-|A|.\]

\end{itemize}
\end{Theorem}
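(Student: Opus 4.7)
The plan is to prove $P_T = Q$, where $Q$ denotes the polyhedron cut out by the inequalities in the statement, by verifying both inclusions $P_T \subseteq Q$ and $Q \subseteq P_T$.

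For $P_T \subseteq Q$, it suffices to check the inequalities on the vertices given by the previous theorem. The box constraints $0 \le x_e \le 1$ are immediate. For the parity constraints, fix an inner vertex $v$ and an odd-cardinality subset $A \subseteq \mathcal{N}_T(v)$. Given a vertex $x$, write $a = |\{e \in A : x_e = 1\}|$ and $b = |\{e \in \mathcal{N}_T(v) \setminus A : x_e = 1\}|$. The vertex description yields $a + b \equiv 0 \pmod{2}$, and the desired inequality rearranges to $b + (|A| - a) \ge 1$. Both summands are non-negative integers; moreover, since $|A|$ is odd, the sum $b + (|A| - a) \equiv (a+b) + |A| \equiv 1 \pmod{2}$, so it is a non-negative odd integer and hence $\ge 1$.

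For the reverse inclusion $Q \subseteq P_T$, the strategy I would pursue is to show that $Q$ is an integral polytope and that its integer points are exactly the vertices of $P_T$. The second step is straightforward: any $x \in Q \cap \ZZ^{|E(T)|}$ lies in $\{0,1\}^{|E(T)|}$ by the box constraints, and if $\sum_{e \in \mathcal{N}_T(v)} x_e$ were odd for some $v \in I(T)$, taking $A = \{e \in \mathcal{N}_T(v) : x_e = 1\}$ would make the corresponding inequality reduce to $-|A| \ge 1 - |A|$, i.e.\ $0 \ge 1$, a contradiction. So integer points of $Q$ coincide with the vertex set from the previous theorem.

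The main obstacle is therefore proving integrality of $Q$. One natural plan is induction on the tree structure. For a star graph $S_n$, integrality can be checked directly from the constraint structure (the constraints collapse to a single known parity polytope, essentially a restricted cube). For a general tree, the inequalities for different inner vertices involve disjoint sets of coordinates except for the inner edges they share, so $Q$ may be viewed as a fibre product of star polytopes $Q_{S_{\deg(v)}}$ glued along inner-edge coordinates, and one would need to argue that this gluing preserves integrality. An alternative direct approach is: assume for contradiction that a vertex $x^{*}$ of $Q$ has a non-integer coordinate $x^{*}_{e_{0}} \in (0,1)$; then construct a perturbation direction $y \in \mathbb{R}^{|E(T)|}$ with $x^{*} \pm \varepsilon y \in Q$ for small $\varepsilon > 0$, contradicting extremality. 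The technical challenge is to choose the support of $y$ so that every parity inequality tight at $x^{*}$ remains tight under perturbation, which translates to a parity condition on $y$ that should be satisfiable on any connected subgraph where $x^{*}$ has only strictly interior coordinates. This last step is where the detailed combinatorial bookkeeping lives and is, in my view, the heart of the argument.
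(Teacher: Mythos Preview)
The paper does not prove this theorem; it is quoted with attribution to \cite{bw} and used as input for everything that follows. There is therefore no ``paper's own proof'' to compare your attempt against.

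On the merits of your proposal: the inclusion $P_T\subseteq Q$ is correct and your parity argument is clean. The identification of the integer points of $Q$ with the vertex set of $P_T$ is also correct. The gap is exactly where you flag it yourself: integrality of $Q$. You sketch two plans but carry out neither, and both require real work. The fibre-product idea is essentially the right one and close to how the result is obtained in the literature --- the polytope for a tree decomposes as a gluing of star polytopes along inner-edge coordinates, and one must check that this gluing preserves integrality (it does, but this needs an argument, not just an assertion). The perturbation approach can also be pushed through: if $x^*$ has a fractional coordinate, one can propagate a $\pm\varepsilon$ perturbation along a path of fractional edges, using acyclicity of $T$ to terminate; the delicate point is that at each inner vertex one must keep every tight parity inequality tight, which forces the number of perturbed incident edges to be even, and one has to argue this can always be arranged. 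As written, your proposal is a sound outline with the central lemma missing.
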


We set also the following notations. For $v\in I$, $A\in\mathcal{P}(\mathcal{N}_T(v))$ we will denote by 
$$S_{v,A}(x):=\sum_{e\in\mathcal{N}_T(v)\setminus A} x_e - \sum_{e\in A} x_e,$$
$$H_{v,A}:=\{x\in \RR^{|E|}: S_{v,A}=1-|A|\},$$
$$H^+_{v,A}:=\{x\in \RR^{|E|}: S_{v,A}\ge 1-|A|\},$$
$$H^-_{v,A}:=\{x\in \RR^{|E|}: S_{v,A}\le 1-|A|\}.$$

\vspace{.1in}
\section{Phylogenetic degrees}\label{Qr}
\vspace{.2in}

In this section, we compute the phylogenetic degrees of varieties $X_T$. By \cite[Section 5.3]{fulton}, computing the phylogenetic degrees for toric varieties relies on computing lattice volumes for the associated polytopes $P_T$. 
 For a polytope $P$, we will denote its $n$-dimensional volume by $V_n(P)$. Since $L_T$ is a sublattice of index $2^{|I(T)|}$, we have that $$V_{|E(T)|}(P_T)\cdot |E(T)|!=2^{|I(T)|}\cdot \deg(X_T)$$ providing that the polytope $P_T$ is full-dimensional. Thus, we will concentrate on computing the volumes of the polytopes $P_T$.

Let us consider the unit cube $\fC_T:=[0,1]^{|E(T)|}$.
From the facet description of $P_T$, Theorem~\ref{facetdescription_Z2}, it follows that $$P_T=\fC_T\setminus\bigcup_{v\in I(T),A\in\mathcal{P}_{odd}(\mathcal{N}_T(v))} H^-_{v,A}. $$

The idea of computing the volumes of polytopes $P_T$ is to use the principle of inclusion and exclusion. That means we start with the unit cube with volume one and compute the volumes of the parts that are "cut-off" with hyperplanes  $H_{v,A}$.
A similar approach was used in \cite{RM-claw}. Unfortunately, in the case of arbitrary trees, it will not lead to closed formulas for volumes. However, we will obtain several quite simple recurrence relations. 

In the next result we prove that some parts that are cut-off by hyperplanes $H_{v,A}$ are empty.

\begin{Lemma}\label{0volume}
For any $v\in I(T)$, $A,B\in \mathcal N_T(v)$, the polytope $\fC_T\cap H^-_{v,A}\cap H^-_{v,B}$ is not full-dimensional. Thus, its (lattice) volume is zero.  
\end{Lemma}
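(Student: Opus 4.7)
The plan is to apply the affine coordinate-flip that turns $\fC_T \cap H^-_{v,A}$ into a standard corner simplex of the cube, read off what the second half-space $H^-_{v,B}$ becomes in the new coordinates, and then add the two transformed inequalities to produce a constraint incompatible with cube-positivity in full dimension.

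First I would observe that both $S_{v,A}$ and $S_{v,B}$ involve only the coordinates indexed by $N := \mathcal N_T(v)$, so the intersection splits off a cube factor on $E(T) \setminus N$ and it suffices to prove the dimension drop in the $N$-slice. I would then perform the involution $y_e = 1-x_e$ for $e \in A$ and $y_e = x_e$ for $e \in N \setminus A$ (keeping $y_e = x_e$ for $e \notin N$), which preserves $\fC_T$ as a cube. A direct expansion gives $S_{v,A}(x) = \sum_{e\in N} y_e - |A|$, so
\[
\text{(I)} \qquad H^-_{v,A} \cap \fC_T \ \text{becomes} \ \Bigl\{\, y \in [0,1]^{|E(T)|} : \sum_{e\in N} y_e \le 1 \,\Bigr\}.
\]

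Next, I would split $N$ into the four disjoint pieces $A \cap B$, $A \setminus B$, $B \setminus A$, $N \setminus (A \cup B)$ and re-express $S_{v,B}$ in the $y$-coordinates; using $|A|+|B|-2|A\cap B| = |D|$ with $D := A \triangle B$, the $B$-inequality transforms into
\[
\text{(II)} \qquad \sum_{e \in N \setminus D} y_e - \sum_{e \in D} y_e \le 1 - |D|.
\]
Since $|A|$ and $|B|$ are both odd and $A \ne B$ (otherwise the statement is vacuous), $|D|$ is a positive even integer, so $|D| \ge 2$. Summing (I) and (II) gives $2 \sum_{e \in N \setminus D} y_e \le 2 - |D|$, i.e.\ $\sum_{e \in N \setminus D} y_e \le 1 - |D|/2 \le 0$.

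Combined with $y_e \ge 0$ from the cube, this forces $\sum_{e \in N \setminus D} y_e = 0$; when $|D| \ge 4$ the bound is strict and the intersection is outright empty, and when $|D| = 2$ one further reads off from (I) and (II) that $\sum_{e \in D} y_e = 1$. In every case the intersection is contained in a proper affine subspace of $\mathbb R^{|E(T)|}$, so its Euclidean volume (and therefore its lattice volume) vanishes. The only place that calls for care is writing out $S_{v,B}$ cleanly through the four-block decomposition of $N$; once that identity is in hand, the parity of $|A|+|B|$ does the remaining work.
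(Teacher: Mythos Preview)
Your argument is correct. The paper does not actually prove this lemma; it simply writes ``The proof is similar to \cite[Lemma~3.3]{RM-claw}'' and moves on, so there is no in-paper argument to compare against. Your coordinate flip $y_e=1-x_e$ on $A$, followed by summing the two transformed inequalities to obtain $2\sum_{e\in N\setminus D}y_e\le 2-|D|$, is the natural and clean way to do this, and the parity observation that $|D|=|A|+|B|-2|A\cap B|$ is even (hence $|D|\ge 2$ once $A\ne B$) is exactly the point.

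One small remark: when $A=B$ the intersection reduces to $\fC_T\cap H^-_{v,A}$, which \emph{is} full-dimensional (a simplex times a cube after your flip), so the statement is false rather than ``vacuous'' in that degenerate case. The paper's formulation is already loose here---it writes $A,B\in\mathcal N_T(v)$ where it clearly means distinct elements of $\mathcal P_{odd}(\mathcal N_T(v))$---and the lemma is only ever invoked in the inclusion--exclusion computation with distinct pairs $(v,A)$, so your tacit hypothesis $A\ne B$ is the intended one.
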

\begin{proof}
    The proof is similar to \cite[Lemma 3.3]{RM-claw}.
\end{proof}

\subsection{Polytopes $Q_F$}\label{Q}
In this section, we will compute the volumes of other specific parts that are "cut-off". For this purpose, we define the polytopes $Q_F$ below.

In addition, we will not work only with trees but also with forests. The reason is that later on it will be natural to delete the edge from the tree which will result in a forest. We will use the same notation as for the trees. In particular, for any forest $F$ we still denote $E(F), I(F)$ the set of all edges and inner vertices, respectively.

For any forest $F$ let us denote by $$Q_F:=\fC_F \cap \bigcap_{v\in I(F)} H^-_{v,\emptyset}.$$
Hence, the polytope $Q_F$ is defined by the following inequalities:

\begin{itemize}
\item $0\le x_e\le 1$ for all $e\in E(F)$,
\item For all $v\in I(F)$: $\sum_{e\in\mathcal{N}_F(v)} x_e \le 1.$
\end{itemize}

\begin{Remark}
We can define $Q_F$ even in the pathological case when $F$ has no edges. In that case, $Q_F$ is a "0-dimensional" polytope and, thus, by definition, we have $V_0(Q_F)=1$.
\end{Remark}

The following lemma implies that it would be sufficient to work with trees. However, working with forests will allow us to formulate our results more simply.

\begin{Lemma}\label{P-product-trees}
Let $F$ be a forest with connected components $T_1,\dots, T_k$. Then $Q_{F}=\prod_{i=1}^k Q_{T_i}$. In particular, $$V_{|E(F)|}(Q_{F})=\prod_{i=1}^k V_{|E(T_i)|}(Q_{T_i}).$$
\end{Lemma}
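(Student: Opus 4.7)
The plan is to read off the decomposition directly from the defining inequalities of $Q_F$. Since the ambient space $\mathbb{R}^{|E(F)|}$ decomposes as $\prod_{i=1}^k \mathbb{R}^{|E(T_i)|}$ by grouping coordinates according to which connected component each edge belongs to, I only need to check that the list of inequalities defining $Q_F$ partitions along the same decomposition.

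First I would note that every edge $e \in E(F)$ lies in exactly one $T_i$, so the box constraints $0 \le x_e \le 1$ split perfectly: the cube $\fC_F$ is literally $\prod_{i=1}^k \fC_{T_i}$. Next, every inner vertex $v \in I(F)$ belongs to a unique component $T_i$, and by definition of a connected component all edges incident to $v$ lie in that same $T_i$, so $\mathcal{N}_F(v) = \mathcal{N}_{T_i}(v)$. Therefore the inequality $\sum_{e \in \mathcal{N}_F(v)} x_e \le 1$ involves only coordinates indexed by $E(T_i)$, and the list $\{v \in I(F)\}$ partitions as $\bigsqcup_i I(T_i)$. Combining these observations, a point $x \in \mathbb{R}^{|E(F)|}$ satisfies all defining inequalities of $Q_F$ if and only if, for each $i$, the subvector $x|_{E(T_i)}$ satisfies all defining inequalities of $Q_{T_i}$. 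This yields the set equality $Q_F = \prod_{i=1}^k Q_{T_i}$.

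The volume statement then follows from the standard Fubini-type fact that the Lebesgue volume of a product of polytopes lying in complementary coordinate subspaces is the product of the individual volumes: $V_{|E(F)|}(Q_F) = \prod_{i=1}^k V_{|E(T_i)|}(Q_{T_i})$.

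There is no real obstacle here; the only point that requires care is the bookkeeping that $\mathcal{N}_F(v)$ coincides with $\mathcal{N}_{T_i}(v)$ when $v \in I(T_i)$, which is immediate from the definition of a connected component. The pathological case of an empty component (an isolated vertex, or no edges at all) is handled by the convention $V_0(Q_{\emptyset}) = 1$ recorded in the preceding remark, so it contributes a trivial factor of $1$ to the product.
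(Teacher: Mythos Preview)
Your proof is correct and is essentially the same as the paper's: the paper simply records that the statement ``follows directly from the definition of the polytopes $Q_F$ and $Q_{T_i}$,'' and your argument is exactly the unpacking of that remark.
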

\begin{proof}
 Follows directly from the definition of the polytopes $Q_F$ and $Q_{T_i}$. 
\end{proof}

Note that if $e$ is not the edge from a component isomorphic to $S_1$, the inequalities $x_e\le 1$ are redundant since they can be deduced from the others. Thus, they do not give us facets of $Q_F$.

\begin{Lemma}\label{Vfacets}
Let $P$ be an $n$-dimensional polytope with a point $p\in P$. For a facet $\fF$ of $P$, let us denote $H(\fF)$ the hyperplane defining $\fF$. Then $$V_n(P)=\frac{1}{n}\sum_{\fF} V_{n-1}(\fF)\cdot d(p,H(\fF))$$
where we sum through all facets $\fF$ of $P$.
\end{Lemma}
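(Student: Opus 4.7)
The plan is to decompose $P$ into $n$-dimensional pyramids with common apex $p$, one for each facet, and to apply the classical pyramid volume formula. For each facet $\fF$ of $P$, define
$$P_\fF := \conv(\{p\}\cup\fF),$$
the pyramid over base $\fF$ with apex $p$. Its volume is given by the standard formula
$$V_n(P_\fF)=\frac{1}{n}\,V_{n-1}(\fF)\cdot h_\fF,$$
where $h_\fF$ is the perpendicular distance from $p$ to the hyperplane spanned by the base. Since $\fF\subset H(\fF)$, this height equals $d(p,H(\fF))$. Summing over all facets will then yield the desired identity, provided the pyramids $P_\fF$ tile $P$.

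The main step is therefore to verify that $P=\bigcup_\fF P_\fF$ and that the pyramids have pairwise disjoint interiors. First I would treat the case when $p$ lies in the interior of $P$. Given any $x\in P$ different from $p$, the ray from $p$ through $x$ meets $\partial P$ in a unique point $y$ (since $P$ is a bounded convex body with $p$ in its interior), and $y$ lies on some facet $\fF_0$; thus $x$ belongs to the segment $[p,y]\subset P_{\fF_0}$. For disjointness, if $x$ were an interior point of both $P_\fF$ and $P_{\fF'}$ with $\fF\neq \fF'$, the ray from $p$ through $x$ would cross the boundary at two distinct facets before leaving $P$, contradicting uniqueness of~$y$; the remaining overlap lies on the rays from $p$ to the relative boundaries of the facets, which have dimension at most $n-1$ and so do not affect volumes.

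The case $p\in\partial P$ follows by the same ray argument: for every facet $\fF$ containing $p$ one has $d(p,H(\fF))=0$ and the pyramid $P_\fF$ is degenerate, so these terms vanish on both sides of the identity, and the remaining facets cover $P$ by the very same ray construction applied within the interior of $P$ (or, alternatively, by approximating $p$ by a sequence of interior points and using continuity of both sides in $p$).

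The main obstacle, if any, is the measure-theoretic bookkeeping of the overlap between pyramids; however, this is a standard consequence of convexity and poses no genuine difficulty. No use is made of the specific facet structure of the polytopes $Q_F$ or $P_T$ here, so the lemma is purely a general fact about convex polytopes that will be applied in the sequel.
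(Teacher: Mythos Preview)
Your proof is correct and follows essentially the same approach as the paper: decompose $P$ into the pyramids $P_{\fF}=\conv(\{p\}\cup\fF)$, use the ray from $p$ through a point $x$ to show these pyramids cover $P$ and overlap only in lower-dimensional sets, and then apply the pyramid volume formula. The only cosmetic difference is that you split off the case $p\in\partial P$ explicitly (handling it by degeneracy or continuity), whereas the paper treats all $p\in P$ at once; both are fine for this standard fact.
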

\begin{proof}
Let us denote by $P_{\fF}$ the convex hull of $\fF$ and $p$. We prove that the polytopes $P_{\fF}$ cover the polytope $P$ and the intersection of any $P_{\fF_1}\cap P_{\fF_2}$ is not full-dimensional.

For the first part, consider any point $x\in P$. The ray from $p$ passing through $x$ intersects the boundary of $P$ in point $y$. Clearly, the point $y$ lies on some facet $\fF_0$ of $P$. However, this implies $x\in P_{\fF_0}$, which proves that the polytopes $P_\fF$ cover the polytope $P$.

For the second part, consider a point $x\in P_{\fF_1}\cap P_{\fF_2}$ for some $\fF_1, \fF_2$ which lies in the interior of $P$. Since $x\in P_{\fF_1}$ there must exist a point $y_1\in \fF_1$ such that $x$ lies on the segment between $p$ and $y_1$. Similarly, there exists a point $y_2\in \fF_2$ such that $x$ lies on the segment between $p$ and $y_2$. However, there is a unique point $y$ in the union of all $\fF_i$ such that $x$ lies on the segment between $y$ and $v$. Thus, $y_1=y_2=y$. Therefore $x\in \conv(\fF_1\cap \fF_2,v)$. Since $\fF_1$ and $\fF_2$ are facets, their intersection is of codimension at least two. Thus, the codimension of $\conv(\fF_1\cap \fF_2,v)$ is at least one which proves that $P_{\fF_1}\cap P_{\fF_2}$ is not full-dimensional.

It follows that $$V_n(P)=\sum_{\fF} V_{n}(P_{\fF}).$$
Since every polytope $P_{\fF}$ is a pyramid over $\fF$, we have $V_n(P_{\fF})=(1/n)\cdot V_{n-1}(\fF)\cdot d(p,H(\fF))$. This proves our result.
\end{proof}

Now we provide formulas for the volume of polytopes $Q_F$.

\begin{Lemma}\label{P-formula}
Let $F$ be a forest and let $S\subset E(F)$ be a set of edges such that $|S\cap \mathcal N_F(v)|=1$ for all $v\in I(F)$. Then $$V_{|E(F)|}(Q_F)=\frac{1}{|E(F)|}\sum_{e\in S} V_{|E(F)|-1}(Q_{F\setminus e}).$$
\end{Lemma}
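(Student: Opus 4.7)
The plan is to apply Lemma~\ref{Vfacets} with a base point tailored to $S$. Specifically, I would take
\[
p_e=\begin{cases}1 & \text{if } e\in S,\\ 0 & \text{otherwise.}\end{cases}
\]
The first step would be to verify $p\in Q_F$: the hypothesis $|S\cap\mathcal N_F(v)|=1$ for every $v\in I(F)$ gives $\sum_{e\in\mathcal N_F(v)}p_e=1$, so $p$ satisfies every defining inequality of $Q_F$ and in fact lies on each hyperplane $H_{v,\emptyset}$.

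Next, I would compute the distance from $p$ to each facet hyperplane appearing in the pyramid sum of Lemma~\ref{Vfacets}. The coordinate hyperplane $\{x_e=0\}$ sits at distance $p_e$, which equals $1$ when $e\in S$ and $0$ otherwise, while $H_{v,\emptyset}$ is at distance $0$ since $p$ lies on it. Therefore only the facets $\{x_e=0\}$ with $e\in S$ contribute, each with weight $1$, yielding
\[
V_{|E(F)|}(Q_F)=\frac{1}{|E(F)|}\sum_{e\in S}V_{|E(F)|-1}(Q_F\cap\{x_e=0\}).
\]

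To close the argument, I would identify $Q_F\cap\{x_e=0\}$ with $Q_{F\setminus e}$. Substituting $x_e=0$ into the inequality $\sum_{e'\in\mathcal N_F(v)}x_{e'}\le 1$ for an inner vertex $v$ adjacent to $e$ yields $\sum_{e'\in\mathcal N_F(v)\setminus\{e\}}x_{e'}\le 1$, which is precisely the defining inequality of $Q_{F\setminus e}$ at the corresponding vertex; the other defining inequalities are untouched. Hence $V_{|E(F)|-1}(Q_F\cap\{x_e=0\})=V_{|E(F)|-1}(Q_{F\setminus e})$ and the formula follows.

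The one point needing care is the possible presence of isolated edges (components isomorphic to $S_1$): for such an $e$ the inequality $x_e\le 1$ produces a genuine facet $\{x_e=1\}$ of $Q_F$, which, if $e\notin S$, sits at distance $1$ from $p$ and would contaminate the sum. The natural reading is that $S$ is taken to contain every isolated edge; this is compatible with the hypothesis since an isolated edge lies in no $\mathcal N_F(v)$, and with this convention the distances $d(p,\{x_e=1\})$ vanish and the argument above is unchanged.
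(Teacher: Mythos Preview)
Your argument is essentially identical to the paper's: choose the vertex $x(S)$ with $1$'s on $S$ and $0$'s elsewhere, apply Lemma~\ref{Vfacets}, and identify the surviving facets $\{x_e=0\}$ with $Q_{F\setminus e}$. Your final paragraph about isolated $S_1$-components is a genuine improvement over the paper, which simply asserts that the only facets missing $x(S)$ are the $\fF_e$ for $e\in S$ without addressing the $\{x_e=1\}$ facets coming from isolated edges; as you note, the statement is literally false if such an edge is omitted from $S$ (take $F=S_1$, $S=\emptyset$), and your convention that $S$ contain every isolated edge is the right fix.
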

\begin{proof}
Let us consider a facet $\fF_e$ of $Q_F$ given by the inequality $x_e\ge 0$ for any $e\in E(F)$. We claim that $\fF_e\cong Q_{F\setminus e}$.
Indeed, it is straightforward to check that by putting $x_e=0$ in all inequalities from the facet description of $Q_F$, we obtain the facet description of $Q_{F\setminus e}$.

Let us denote by $x(S)$ the point with coordinates $x(S)_e=1$ for $e\in S$ and $x(S)_e=0$ for $e\not\in S$. We can easily check that $x(S)\in Q_F$; in fact, $x(S)$ is a vertex of $Q_F$. It is easy to check that the point $x(S)$ lies on many facets of $Q_F$. The only facets of $Q_F$ to which it does not belong are the facets $\fF_e$ for $e\in S$. Now we apply Lemma~\ref{Vfacets} for the polytope $Q_F$ and point $x(S)$ to obtain:

$$V_{|E(F)|}(Q_F)=\frac{1}{|E(F)|}\sum_{e\in S} V_{|E(F)|-1}(\fF_e)\cdot d(x(S),H(\fF_e))=\frac{1}{|E(F)|}\sum_{e\in S} V_{|E(F)|-1}(Q_{F\setminus e})\cdot 1.$$

Note that it was sufficient to sum up only through facets $\fF_e$ for $e\in S$ since the point $x(S)$ has distance $0$ from the others. Clearly $d(x(S),\fF_e)=1$ for $e\in S$. This concludes the lemma.

\end{proof}
\begin{Corollary}\label{P-star}
$V_n(Q_{S_n})=1/n!$.
\end{Corollary}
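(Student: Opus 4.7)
The cleanest route is to observe directly that $Q_{S_n}$ is the standard $n$-simplex: since $S_n$ has a unique inner vertex $v$ of valency $n$ with $\mathcal{N}_{S_n}(v)=E(S_n)$, the defining inequalities in the definition of $Q_F$ collapse to $x_e\ge 0$ for each of the $n$ edges together with the single hyperplane constraint $\sum_{e\in E(S_n)} x_e\le 1$ (the upper bounds $x_e\le 1$ are redundant for $n\ge 2$, and for $n=1$ the value is trivially $1=1/1!$). This is literally the standard $n$-simplex in $\mathbb{R}^n$, which has volume $1/n!$.

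However, since the statement is placed as a corollary of Lemma~\ref{P-formula}, the intended proof is surely induction on $n$ using that recurrence, and this is the route I would present. The base case $n=1$ is immediate: $S_1$ has no inner vertex at all, so $Q_{S_1}=[0,1]$ has volume $1=1/1!$. For the inductive step, pick any edge $e\in E(S_n)$ and set $S=\{e\}$. Because $S_n$ has a unique inner vertex $v$ with $\mathcal{N}_{S_n}(v)=E(S_n)$, the hypothesis $|S\cap \mathcal{N}_{S_n}(v)|=1$ holds, so Lemma~\ref{P-formula} applies.

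The only point to check is the identification of $Q_{S_n\setminus e}$. Deleting $e$ from $S_n$ leaves a forest consisting of the star $S_{n-1}$ together with an isolated vertex that carries no edges and no inner-vertex constraints; by Lemma~\ref{P-product-trees} (or directly from the definition), such an isolated vertex contributes nothing to the polytope, so $Q_{S_n\setminus e}\cong Q_{S_{n-1}}$. Applying Lemma~\ref{P-formula} and the inductive hypothesis gives
\[
V_n(Q_{S_n}) \;=\; \frac{1}{n}\,V_{n-1}(Q_{S_{n-1}}) \;=\; \frac{1}{n}\cdot\frac{1}{(n-1)!} \;=\; \frac{1}{n!},
\]
as desired. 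There is no real obstacle here; the only mild subtlety is the book-keeping about the isolated vertex produced when an edge of a star is removed, which is handled painlessly by the product formula of Lemma~\ref{P-product-trees} and the convention $V_0(Q_\emptyset)=1$ recorded just before it.
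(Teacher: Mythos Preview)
Your proof is correct and follows exactly the two approaches the paper itself records: the paper's proof says only that the base case $n=1$ holds and one proceeds by induction via Lemma~\ref{P-formula}, and alternatively that $Q_{S_n}$ is the unit simplex. You have spelled out both routes in more detail than the paper does (including the harmless bookkeeping about the isolated vertex in $S_n\setminus e$), so there is nothing to add.
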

\begin{proof}
The statement is true for $n=1$, and then one can proceed by induction using Lemma \ref{P-formula}. Alternatively, it can be checked directly, since $Q_{S_n}$ is a unit simplex.
\end{proof}
\begin{Corollary}\label{P-doublestar}
$$V_{m+n+1}(Q_{S_{m,n}})=\frac{1}{m!n!(m+n+1)}.$$
\end{Corollary}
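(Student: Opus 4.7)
The plan is to apply Lemma \ref{P-formula} with the minimal possible choice of $S$, namely $S = \{c\}$ where $c$ is the inner edge of $S_{m,n}$. Since $c$ is adjacent to both inner vertices, the singleton $\{c\}$ satisfies $|S \cap \mathcal{N}_{S_{m,n}}(v)| = 1$ for every $v \in I(S_{m,n})$, so the hypothesis of Lemma \ref{P-formula} is met. This reduces the computation to a single term involving $Q_{S_{m,n} \setminus c}$.

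Next, I would observe that deleting the inner edge $c$ from $S_{m,n}$ disconnects it into exactly two stars: $S_m$ (the $m$ edges around the first inner vertex) and $S_n$ (the $n$ edges around the second inner vertex). Thus $S_{m,n} \setminus c$ is a forest with these two connected components, and Lemma \ref{P-product-trees} gives
\[
V_{m+n}\bigl(Q_{S_{m,n}\setminus c}\bigr) = V_m\bigl(Q_{S_m}\bigr)\cdot V_n\bigl(Q_{S_n}\bigr).
\]

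Then I would invoke Corollary \ref{P-star} to evaluate each factor as $V_m(Q_{S_m}) = 1/m!$ and $V_n(Q_{S_n}) = 1/n!$. Combining everything through the recurrence from Lemma \ref{P-formula} yields
\[
V_{m+n+1}(Q_{S_{m,n}}) = \frac{1}{m+n+1}\cdot V_{m+n}(Q_{S_{m,n}\setminus c}) = \frac{1}{(m+n+1)\,m!\,n!},
\]
which is the desired identity.

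There is no real obstacle here: the previously established lemmas do all the heavy lifting. The only point that needs verification is the admissibility of the choice $S = \{c\}$ in Lemma \ref{P-formula}, and this is immediate since the inner edge sits in the neighborhood of each of the two inner vertices exactly once. One could alternatively prove the result by choosing $S$ to be one leaf-edge at each inner vertex and inducting simultaneously on $m$ and $n$, but the direct application above is substantially cleaner.
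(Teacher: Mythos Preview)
Your proof is correct and follows exactly the same route as the paper: apply Lemma~\ref{P-formula} with $S$ equal to the singleton containing the inner edge, then use Lemma~\ref{P-product-trees} together with Corollary~\ref{P-star} to evaluate the resulting product. There is nothing to add.
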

\begin{proof}
By using the formula from Lemma~\ref{P-formula} for the set $S=\{e\}$ where $e$ is the edge connecting two inner vertices, we obtain $$V_{m+n+1}(Q_{S_{m,n}})=\frac{1}{m+n+1}\cdot V_{m+n}(Q_{S_{m,n}\setminus e})=$$
$$=\frac{1}{m+n+1}ˇ\cdot V_m(Q_{S_m})\cdot V_n(Q_{S_n})=\frac{1}{m+n+1}\cdot\frac{1}{m!}\cdot \frac{1}{n!}.$$

We used Lemma \ref{P-product-trees} for the graph $S_{m,n}\setminus e$ which has two components that are $S_m$ and $S_n$.
\end{proof}

\begin{Example}\label{example-P}
We illustrate the formula presented in Lemma \ref{P-formula} for the tree $T$ shown in Figure~\ref{71}:

\vspace{.1in}
\begin{figure}
\centering
\begin{tikzpicture}
\draw [black, thick] (2,1)--(1,2);
\draw [black, thick] (1,0)--(2,1);
\draw [black, thick, red] (2,1)--(4,1);
\node [right, red, ultra thick] at (2.6,1.3) {$e_1$};
\draw [black, thick] (4,1)--(4,2);
\draw [black, thick] (4,1)--(6,1)--(7,0);
\draw [black, thick,red] (7,2)--(6,1);
\node [right, black, ultra thick] at (5.8,0.3) {$e_3$};
\node [right, red, ultra thick] at (5.8,1.7) {$e_2$};
%\node [right, red, ultra thick] at (\x-0.2,-0.3) {\x};}
\end{tikzpicture}
\caption{}
\label{71}
\end{figure}

Let $S=\{e_1,e_2\}$ be the set consisting of two red edges on the picture. Then 
$$V_7(Q_T)=\frac 17\left(V_6(Q_{S_2\cup S_{1,2}})+V_6(Q_{T\setminus e_2}) \right).$$

For the union of star and double star we can use Lemma \ref{P-product-trees} and Corollaries \ref{P-star} and \ref{P-doublestar} to obtain:

$$V_6(Q_{S_2\cup S_{1,2}})=V_2(Q_{S_2})\cdot V_4(Q_{S_{2,3}})=\frac{1}{2!}\cdot \frac{1}{1!\cdot2!\cdot 4}=\frac{1}{16}.$$

For $T\setminus e_2$ we use again Lemma~\ref{P-formula}, this time taking $S=\{e_1,e_3\}$:

$$V_6(Q_{T\setminus e_2})=\frac{1}{6}\left(V_5(Q_{S_2\cup S_{1,1}})+V_5(Q_{S_{2,2}})\right)=\frac{1}{6}\left(\frac{1}{2!}\cdot \frac{1}{1!\cdot1!\cdot3}+\frac{1}{2!\cdot 2!\cdot 5} \right)=
\frac{1}{6}\cdot\frac{13}{60}=\frac{13}{360}.$$

Thus,
$$V_7(Q_T)=\frac 17\left(\frac{1}{16}+\frac{13}{360} \right)
=\frac{71}{7!}.$$
\end{Example}

\begin{Remark}
Instead of computing volumes of polytopes $Q_F$ we can compute their lattice volumes in the lattice $\ZZ^{|E(F)|}$. The advantage is that the results would be integers (for instance, in the previous example, we would get 71). However, some of the formulas are easier to state with volumes, others are easier to state with lattice volumes. It is not clear which is more convenient to work with. Here we decided to work with volumes.
\end{Remark}

Note that the formula from Lemma~\ref{P-formula} together with the fact $V_1(Q_{S_1})=1$ allows us to recursively compute the volume of $Q_F$ for any forest $F$. 

\begin{Lemma}\label{spliting}
Let $F$ be a forest. Assume that there exist two adjacent inner vertices $v_1,v_2$ for which there are edges $e_1\in \mathcal N_F(v_1)$, $e_2\in \mathcal N_F(v_2)$ such that one endpoint of both $e_1,e_2$ is a leaf. Let us denote by $e_0$ the edge between $v_1$ and $v_2$. Then $$
V_{|E(F)|-1}(Q_{T\setminus e_1})+V_{|E(F)|-1}(Q_{T\setminus e_2})=V_{|E(F)|-1}(Q_{T\setminus e_0}).$$
\end{Lemma}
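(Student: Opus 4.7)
The plan is to reduce the identity to the product rule $\int_0^1 (fg)'(s)\, ds = f(1)g(1) - f(0)g(0)$ by expressing all three volumes as integrals over the coordinate $x_{e_0}$. Let $T_1, T_2$ be the two components of $F \setminus e_0$ containing $v_1, v_2$ respectively; any other components of $F$ contribute a common multiplicative factor to all three volumes (by Lemma~\ref{P-product-trees}), so one may assume $F = T_1 \cup \{e_0\} \cup T_2$. For $i \in \{1,2\}$ and $s \in [0,1]$, introduce the auxiliary function $R(T_i, v_i, s)$ defined as the volume of the polytope in $[0,1]^{E(T_i)}$ cut out by $\sum_{e \in \mathcal{N}_{T_i}(v_i)} x_e \le s$ together with $\sum_{e \in \mathcal{N}_{T_i}(u)} x_e \le 1$ for every inner vertex $u \ne v_i$ of $T_i$. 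Whether or not $v_i$ is inner in $T_i$, one has $R(T_i, v_i, 1) = V_{|E(T_i)|}(Q_{T_i})$.

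The key observation is that whenever $e$ is an edge at $v$ whose other endpoint is a leaf, $\tfrac{d}{ds} R(T, v, s) = R(T \setminus e, v, s)$ on $[0,1]$. Indeed, $x_e$ appears only in the $v$-constraint, so fixing $x_e = u \in [0,s]$ and integrating gives
$$R(T, v, s) = \int_0^s R(T \setminus e, v, s-u)\, du,$$
and the derivative formula is the fundamental theorem of calculus.

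Next, slice $Q_{F \setminus e_1}$ by $x_{e_0} = t$. Reading off the facet description of $Q_{F\setminus e_1}$, the slice factors as a product of a polytope on $E(T_1 \setminus e_1)$ with the $v_1$-constraint tightened to $\le 1-t$ and a polytope on $E(T_2)$ with the $v_2$-constraint tightened to $\le 1-t$, so
$$V_{|E(F)|-1}(Q_{F \setminus e_1}) = \int_0^1 R(T_1 \setminus e_1, v_1, 1-t)\, R(T_2, v_2, 1-t)\, dt,$$
and analogously for $Q_{F \setminus e_2}$. Setting $f(s) := R(T_1, v_1, s)$, $g(s) := R(T_2, v_2, s)$ and substituting $s = 1-t$, the key observation recasts the sum of these two integrals as $\int_0^1 (f'g + fg')(s)\, ds = f(1)g(1) - f(0)g(0)$.

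Finally, since $e_1 \in \mathcal{N}_{T_1}(v_1)$, the constraint at $s=0$ together with $x_e \ge 0$ forces every edge around $v_1$ to vanish, making the polytope defining $f(0)$ lower-dimensional; hence $f(0) = 0$, and symmetrically $g(0) = 0$. The remaining term is $f(1)g(1) = V_{|E(T_1)|}(Q_{T_1}) V_{|E(T_2)|}(Q_{T_2}) = V_{|E(F)|-1}(Q_{F \setminus e_0})$, again by Lemma~\ref{P-product-trees}. The main technical hurdle is isolating the auxiliary function $R$ and establishing its derivative identity; once these are in hand, the edge cases (such as $v_1$ or $v_2$ having degree $2$ in $F$ and becoming a leaf upon deletion) are automatically absorbed by the convention that $R$ always retains the $v$-constraint, and the lemma collapses to $\int_0^1 (fg)'(s)\, ds = f(1)g(1)$.
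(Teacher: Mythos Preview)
Your argument is correct, but it takes a genuinely different route from the paper's own proof. The paper simply applies Lemma~\ref{P-formula} twice to the same polytope $Q_F$: once with a selector set $S$ containing $e_0$, and once with $S' := (S\setminus\{e_0\})\cup\{e_1,e_2\}$, which also meets every $\mathcal N_F(v)$ in a single edge. Since both applications compute $V_{|E(F)|}(Q_F)$, equating the two sums and cancelling the common terms over $S\cap S'$ yields the identity in two lines.

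Your approach bypasses Lemma~\ref{P-formula} entirely and instead introduces the sliced-volume function $R(T,v,s)$, establishes the derivative identity $\frac{d}{ds}R(T,v,s)=R(T\setminus e,v,s)$ for a leaf edge $e$ at $v$, and then reads the lemma off from the product rule $\int_0^1(fg)'=f(1)g(1)-f(0)g(0)$ after Fubini along the $x_{e_0}$-coordinate. This is longer but entirely self-contained and arguably more transparent geometrically; the auxiliary $R$ and its derivative law are natural tools that could be reused for other volume identities of the $Q$-polytopes. The paper's proof, by contrast, is slicker precisely because the combinatorial content has already been packaged into Lemma~\ref{P-formula}.
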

\begin{proof}
Let $S$ be a set of edges of $F$ such that $|S\cap \mathcal N_F(v)|=1$ for every $v\in I(F)$ and $e_0\in S$. Such a set $S$ always exists. Notice that the set $S':=S\setminus\{e_0\}\cup\{e_1,e_2\}$ also has the property that $|S'\cap \mathcal N_F(v)|=1$ for every $v\in I(F)$. Thus, we can use Lemma~\ref{P-formula} to express the volume of $Q_T$ for both sets $S$ and $S'$. By comparing these two expressions we obtain the statement of our result. 
\end{proof}

For a forest $F$ with a vertex $v$ and a positive integer $k$, let us denote by $[F,v,k]$ the forest obtained by adding $k$ leaves to the vertex $v$.

\begin{Corollary}\label{P-splitting-formula}
Let $F$ be a forest with an inner edge $e_0$ connecting the vertices $v$ and $w$. Let $e_1$ be an edge that is adjacent to $v$ such that its other end is a leaf. Then 

$$
V_{|E(F)|}(Q_{F})=-V_{|E(F)|}(Q_{[T\setminus e_1,w,1]})+V_{|E(F)|}(Q_{[T\setminus e_0,w,1]}).$$
\end{Corollary}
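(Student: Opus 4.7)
The plan is to apply Lemma~\ref{spliting} not to $F$ itself (which may not have a leaf-edge adjacent to $w$) but to a slightly enlarged forest obtained by attaching one extra leaf at $w$. Set $F' := [F,w,1]$ and let $\tilde e$ denote the newly added leaf-edge, which has one endpoint at $w$ and the other at a leaf.

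First I would check that the hypotheses of Lemma~\ref{spliting} are satisfied for $F'$. Since $e_0$ is an inner edge of $F$, both $v$ and $w$ are inner vertices of $F$; attaching a leaf at $w$ does not change this, so $v$ and $w$ are adjacent inner vertices of $F'$ joined by $e_0$. The edge $e_1 \in \mathcal N_{F'}(v)$ still has a leaf endpoint, and $\tilde e \in \mathcal N_{F'}(w)$ has a leaf endpoint by construction. Thus Lemma~\ref{spliting} applies with $v_1 = v$, $v_2 = w$, $e_1$ as given, $e_2 = \tilde e$, and middle edge $e_0$, yielding
\[
V_{|E(F')|-1}(Q_{F'\setminus e_1}) + V_{|E(F')|-1}(Q_{F'\setminus \tilde e}) = V_{|E(F')|-1}(Q_{F'\setminus e_0}).
\]

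Next I would identify the three deletions. Since $|E(F')| = |E(F)|+1$, the dimensions in the formula are all $|E(F)|$. Removing $\tilde e$ undoes the leaf attachment, so $F'\setminus \tilde e = F$. Removing $e_1$ or $e_0$ from $F'$ commutes with the leaf attachment at $w$, so $F'\setminus e_1 = [F\setminus e_1, w, 1]$ and $F'\setminus e_0 = [F\setminus e_0, w, 1]$. Substituting and rearranging gives exactly
\[
V_{|E(F)|}(Q_F) = -V_{|E(F)|}(Q_{[F\setminus e_1, w, 1]}) + V_{|E(F)|}(Q_{[F\setminus e_0, w, 1]}),
\]
which is the claim.

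There is no real obstacle here beyond bookkeeping: the only non-routine point is recognizing that, although Lemma~\ref{spliting} does not directly apply to $F$ (there may be no leaf-edge at $w$), adding a fictitious leaf at $w$ supplies the missing hypothesis while leaving $F$ recoverable as $F'\setminus \tilde e$. The sign in the corollary then comes from moving this term to the left-hand side.
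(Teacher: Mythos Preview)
Your proof is correct and follows exactly the same approach as the paper, which simply says ``We apply Lemma~\ref{spliting} for the forest $[T,w,1]$.'' You have filled in the bookkeeping that the paper's one-line proof leaves implicit: verifying the hypotheses of Lemma~\ref{spliting} for $F'=[F,w,1]$, identifying the three deletions, and rearranging.
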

\begin{proof}
We apply Lemma~\ref{spliting} for the forest $[T,w,1]$.   
\end{proof}

For a better understanding of the relation from Corollary \ref{P-splitting-formula}, we refer to Figure~\ref{Q_T}:

\begin{figure}[H]
\begin{tikzpicture}
%\foreach \x in {2,6}{
%\filldraw [red] (\x,1) circle (3pt);}
\draw [black, thick] (1,0)--(2,1)--(1,2);
\draw [black, thick] (2,1)--(4,1);
\draw [black, thick] (2,1)--(0.5,1.5);
\draw [black, thick] (2,1)--(0.5,0.5);
\draw [black, thick] (0,1)--(2,1);
\draw [red, thick] (2,1)--(2,2);
\draw [black, thick] (4,1)--(5,2)--(4,1)--(6,1)--(4,1)--(5,0);
\draw [black, thick] (4,1)--(5.5,1.5);
\draw [black, thick] (4,1)--(5.5,0.5);
\node [right, black, ultra thick] at (0.5,1.7) {$T_2$};
\node [right, black, ultra thick] at (1,2) {$T_1$};
\node [right, black, ultra thick] at (0,1.2) {$T_3$};
\node [right, black, ultra thick] at (-0.1,0.4) {$...$};
\node [right, black, ultra thick] at (1,0) {$T_k$};

\node [right, black, ultra thick] at (5.5,0.5) {$....$};
\node [right, black, ultra thick] at (5.5,1.5) {$T_{k+2}$};
\node [right, black, ultra thick] at (5,2) {$T_{k+1}$};
\node [right, black, ultra thick] at (6,1) {$T_{k+3}$};
\node [right, black, ultra thick] at (5, 0) {$T_{l}$};
\node [right, blue, ultra thick] at (7,1) {\huge $=$};

\end{tikzpicture}
\end{figure}

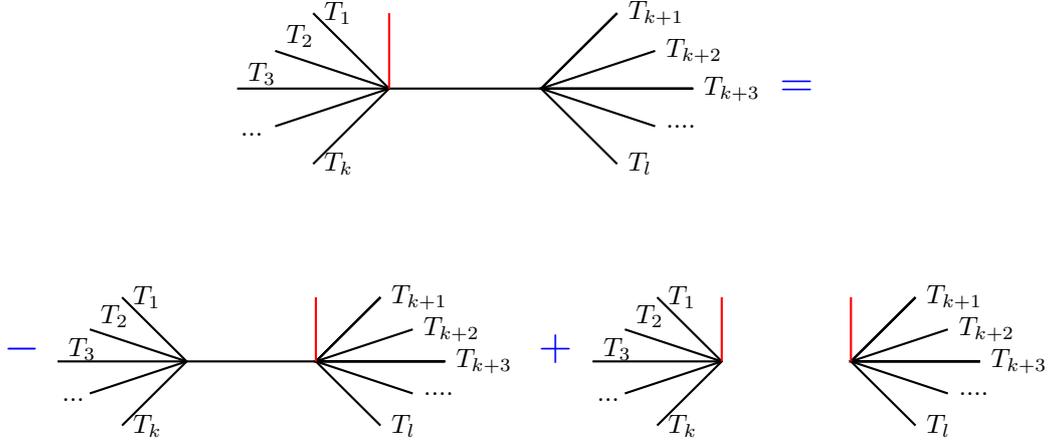
\begin{figure}[H]
\begin{minipage}[c]{0.5\linewidth}
\begin{tikzpicture}[scale=0.85]
\centering
%\foreach \x in {2,6}{
%\filldraw [red] (\x,1) circle (3pt);}
\draw [black, thick] (1,0)--(2,1)--(1,2);
\draw [black, thick] (2,1)--(4,1);
\draw [black, thick] (2,1)--(0.5,1.5);
\draw [black, thick] (2,1)--(0.5,0.5);
\draw [black, thick] (0,1)--(2,1);
%\draw [red, thick] (2,1)--(2,2);
\draw [black, thick] (4,1)--(5,2)--(4,1)--(6,1)--(4,1)--(5,0);
\draw [black, thick] (4,1)--(5.5,1.5);
\draw [black, thick] (4,1)--(5.5,0.5);
\node [right, black, ultra thick] at (0.5,1.7) {$T_2$};
\node [right, black, ultra thick] at (1,2) {$T_1$};
\node [right, black, ultra thick] at (0,1.2) {$T_3$};
\node [right, black, ultra thick] at (-0.1,0.4) {$...$};
\node [right, black, ultra thick] at (1,0) {$T_k$};
\draw [red, thick] (4,1)--(4,2);
\node [right, black, ultra thick] at (5.5,0.5) {$....$};
\node [right, black, ultra thick] at (5.5,1.5) {$T_{k+2}$};
\node [right, black, ultra thick] at (5,2) {$T_{k+1}$};
\node [right, black, ultra thick] at (6,1) {$T_{k+3}$};
\node [right, black, ultra thick] at (5, 0) {$T_{l}$};
\node [right, blue, ultra thick] at (-1,1.2) {\huge $-$};
\end{tikzpicture}
\end{minipage}\hfill
\begin{minipage}[c]{0.5\linewidth}
\centering
\begin{tikzpicture}[scale=0.85]
%\foreach \x in {2,6}{
%\filldraw [red] (\x,1) circle (3pt);}
\draw [black, thick] (1,0)--(2,1)--(1,2);
%\draw [black, thick] (2,1)--(4,1);
\draw [black, thick] (2,1)--(0.5,1.5);
\draw [black, thick] (2,1)--(0.5,0.5);
\draw [black, thick] (0,1)--(2,1);
\draw [red, thick] (2,1)--(2,2);
\draw [black, thick] (4,1)--(5,2)--(4,1)--(6,1)--(4,1)--(5,0);
\draw [black, thick] (4,1)--(5.5,1.5);
\draw [black, thick] (4,1)--(5.5,0.5);
\node [right, black, ultra thick] at (0.5,1.7) {$T_2$};
\node [right, black, ultra thick] at (1,2) {$T_1$};
\node [right, black, ultra thick] at (0,1.2) {$T_3$};
\node [right, black, ultra thick] at (-0.1,0.4) {$...$};
\node [right, black, ultra thick] at (1,0) {$T_k$};
\draw [red, thick] (4,1)--(4,2);
\node [right, black, ultra thick] at (5.5,0.5) {$....$};
\node [right, black, ultra thick] at (5.5,1.5) {$T_{k+2}$};
\node [right, black, ultra thick] at (5,2) {$T_{k+1}$};
\node [right, black, ultra thick] at (6,1) {$T_{k+3}$};
\node [right, black, ultra thick] at (5, 0) {$T_{l}$};
%\node [right, black, ultra thick] at (7,1) {$$};
\node [right, blue, ultra thick] at (-1,1.2) {\huge $+$};
\end{tikzpicture}
\end{minipage}
\caption{Recursive formula for volumes of polytopes $Q_F$}\label{Q_T}
\end{figure}

 Corollary~\ref{P-splitting-formula} also allows us to recursively compute $V_{|E(T)|}(Q_T)$. Even though both graphs on the right-hand side have the same number of edges, by systematically moving the leaves toward one vertex we will end up with a forest in which every component is a star.

\subsection{Volume}\label{vol}
For a forest $F$ with an inner edge $e$, let us define a graph $G//e:=[F/e,v,1]$ where $v$ is the contracted vertex. In other words, we contract the edge but we keep it as a leaf. Thus, $|E(F//e)|=|E(F)|$.

\begin{Example}

Consider a tree $T$ with three inner vertices of degrees 4,3 and 3. If $e$ is the inner edge between the first two inner vertices, then the tree $T//e$ has two inner vertices of degrees 6 and 3 as in Figure~\ref{example//e}.

\begin{figure}[H]
%\begin{subfigure}{0.5\textwidth}
    
\begin{minipage}[c]{0.5\linewidth}
\centering
\begin{tikzpicture}
\draw [black, thick] (0.3,1)--(2,1)--(1,2);
\draw [black, thick] (1,0)--(2,1);
\draw [black, thick, red] (2,1)--(4,1);
\draw [black, thick] (4,1)--(4,2);
\draw [black, thick] (4,1)--(6,1)--(7,2)--(6,1)--(7,0);
%%\node [right, black, ultra thick] at (\x-0.2,-0.3) {\x};}
\end{tikzpicture}
\caption*{$T$}
\end{minipage}\hfill
\begin{minipage}[c]{0.4\linewidth}
\centering
\begin{tikzpicture}
\draw [black, thick] (0.3,1)--(2,1)--(1,2);
\draw [black, thick] (1,0)--(2,1);
\draw [red, thick] (2,1)--(2,2);
\draw [black, thick] (2,1)--(3,2);
\draw [black, thick] (2,1)--(4,1);
%\draw [black, thick] (4,1)--(4,2);
\draw [black, thick] (4,1)--(5,2)--(4,1)--(5,0);
%%\node [right, black, ultra thick] at (\x-0.2,-0.3) {\x};}
\end{tikzpicture}
\caption*{$T//e$}
\end{minipage}
\caption{}\label{example//e}
\end{figure}

\end{Example}
For a set $E':=\{e_1,\dots,e_k\}\subset E(F)$ we define the graph $$F//E':=((\dots(F//e_1)//e_2)//\dots//e_k).$$
Note that the resulting graph does not depend on the order in which we are contracting the edges.

For a set $I':=\{v_1,\dots,v_k\}\subset I(F)$, we define the forest $G(I')$ as the forest with inner vertices $I'$ and the edges $\bigcup_{i=1}^k \mathcal N_F(v_i)$ such that $\mathcal N_{G(I')}(v_i)=\mathcal N_F(v_i)$.

\begin{Example}
Consider the tree $T$ with three inner vertices of degrees 3, 4, and 4. Let the set $I'$ be formed by the first and last inner vertex. Then $G(I')$ is a forest that consists of 3-star and 4-star, as in Figure~\ref{exampleG(I)}.
\begin{figure}[H]
\begin{minipage}[c]{0.5\linewidth}
\centering
\begin{tikzpicture}
\foreach \x in {2,6}{
\filldraw [red] (\x,1) circle (3pt);}
\draw [black, thick] (1,0)--(2,1)--(1,2);
\draw [black, thick] (2,1)--(4,1)--(3,2)--(4,1)--(5,2);
\draw [black, thick] (4,1)--(6,1)--(7,2)--(6,1)--(7,1)--(6,1)--(7,0);
\end{tikzpicture}
\caption*{$T$ with marked set $I'$}
\end{minipage}\hfill
\begin{minipage}[c]{0.5\linewidth}
\centering
\begin{tikzpicture}

\foreach \x in {2,6}{
\filldraw [red] (\x,1) circle (3pt);}
\draw [black, thick] (1,0)--(2,1)--(1,2);
\draw [black, thick] (2,1)--(3.8,1);
\draw [black, thick] (4.2,1)--(6,1)--(7,2)--(6,1)--(7,1)--(6,1)--(7,0);
%\node [red, ultra thick] at (2,1);
% \foreach \x in {(1), (2)}{
% \node[bullet, red] at \x {};
% \fill \x circle[radius=2pt];
\end{tikzpicture}
\caption*{$G(I')$}
\end{minipage}
\caption{}
\label{exampleG(I)}
\end{figure}

\end{Example}

\begin{Lemma}\label{beautiful}
Let $F$ be a forest and $I':=\{v_1,\dots, v_k\}\subset I(F)$. Denote by $l$ the number of edges in $\bigcup_{i=1}^k \mathcal N_F(v_i)$. Let $A_1,\dots,A_k$ be sets of edges of $F$ such that $A_i\subset \mathcal N_F(v_i)$. Consider the set $$E':=\{e\in IE(F): \exists \ i,j: e\in A_i\cap \mathcal N_F(v_j), e\not\in A_j \}.$$ Then $$\fC_F\cap\bigcap_{i=1}^k H^-_{v_i,A_i}\cong Q_{G(I')//E'}\times[0,1]^{|E(F)|-l}.$$
In particular, $$V_{|E(F)|}\left(\fC_F\cap\bigcap_{i=1}^k H^-_{v_i,A_i}\right)=V_l (Q_{G(I')//E'}).$$
\end{Lemma}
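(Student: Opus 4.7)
The plan is to construct an explicit affine volume-preserving bijection between the two polytopes, in three stages. First, I separate off the trivial factor $[0,1]^{|E(F)|-l}$: the $|E(F)|-l$ edges of $F$ not adjacent to any $v_i\in I'$ appear in no constraint $H^-_{v_i,A_i}$, so their coordinates are constrained only by $0\le x_e\le 1$ and contribute an unconstrained subcube directly. It remains to produce an affine isomorphism, on the $l$ coordinates indexing $\bigcup_{i}\mathcal N_F(v_i)$, between the intersection of the corresponding subcube with $\bigcap_i H^-_{v_i,A_i}$ and $Q_{G(I')//E'}$.

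Second, for every edge $e$ adjacent to some $v_i\in I'$ with $e\notin E'$, substitute $y_e:=1-x_e$ if $e\in A_i$ and $y_e:=x_e$ otherwise. The definition of $E'$ is exactly what makes this well-defined: if $e$ is an inner edge between $v_i,v_j\in I'$ with $e\notin E'$, then $e$ has the same status in $A_i$ and $A_j$, so the two candidate substitutions agree. Each such substitution has Jacobian $\pm 1$ and preserves the box constraints; a direct computation transforms the $v_i$ constraint into
\[
\sum_{e\in\mathcal N_F(v_i)\setminus A_i^{\mathrm{sp}}} y_e \;-\; \sum_{e\in A_i^{\mathrm{sp}}} y_e \;\le\; 1-|A_i^{\mathrm{sp}}|, \qquad A_i^{\mathrm{sp}}:=A_i\cap E'.
\]

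Third, I process each connected component $C$ of the auxiliary graph $(I',E')$. Since $F$ is a forest, $C$ is a subtree of $F$ and all $|C|-1$ edges of $F$ with both endpoints in $C$ lie in $E'$ (otherwise there would be a cycle in $F$); denote this edge set by $E'_C$ and write $E_C^{\mathrm{out}}$ for the remaining edges of $\bigcup_{v_i\in C}\mathcal N_F(v_i)$. Root $C$ at an arbitrary $v_C^*$, and for every non-root $v_i\in C$ with parent edge $e_i$, introduce a new coordinate $y_{e_i^{\mathrm{new}}}$ equal to the slack $s_{v_i}$ of the constraint at $v_i$:
\[
y_{e_i^{\mathrm{new}}}\;:=\;1-|A_i^{\mathrm{sp}}|-\sum_{e\in\mathcal N_F(v_i)\setminus A_i^{\mathrm{sp}}}y_e+\sum_{e\in A_i^{\mathrm{sp}}}y_e.
\]
Tautologically, $s_{v_i}\ge 0$ becomes $y_{e_i^{\mathrm{new}}}\ge 0$. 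Summing the slacks over $C$ and noting that each $e\in E'_C$ contributes $y_e$ from its non-$A$-endpoint and $(1-y_e)$ from its $A$-endpoint gives $\sum_{v_i\in C}s_{v_i}=1-\sum_{e\in E_C^{\mathrm{out}}}y_e$, so the remaining constraint $s_{v_C^*}\ge 0$ becomes exactly the vertex-sum inequality $\sum_{e\in E_C^{\mathrm{out}}}y_e+\sum_{e\in E'_C}y_{e^{\mathrm{new}}}\le 1$ at the merged vertex $v_C$ of $G(I')//E'$. Ordering the parent edges by BFS distance from $v_C^*$ makes the Jacobian matrix of $(y_e)_{e\in E'_C}\mapsto(y_{e^{\mathrm{new}}})_{e\in E'_C}$ upper-triangular with $\pm 1$ on the diagonal, so the full map is a volume-preserving affine bijection onto $Q_{G(I')//E'}\times[0,1]^{|E(F)|-l}$.

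The hard part will be Stage~3: choosing the slack coordinates so that the Jacobian is manifestly triangular, and verifying that the box constraints $0\le y_{e^{\mathrm{new}}}\le 1$ in the target (and, in the reverse direction, $0\le x_e\le 1$ for $e\in E'_C$) are implied by the vertex-sum constraints of $Q_{G(I')//E'}$ together with the non-negativity of the remaining coordinates. The first two stages are routine bookkeeping, and the volume statement then follows at once since an affine bijection with Jacobian $\pm 1$ preserves volume and the factor $[0,1]^{|E(F)|-l}$ has volume one.
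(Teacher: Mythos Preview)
Your three-stage construction is correct and, once unwound, is exactly the change of coordinates the paper writes down in one stroke: identity or $x_e\mapsto 1-x_e$ off $E'$, and $x_e\mapsto 1-|A_{\varphi(e)}|-S_{v_{\varphi(e)},A_{\varphi(e)}}(x)$ on $E'$, where $\varphi$ assigns to each $e\in E'$ its child endpoint after rooting each component of $(I',E')$---your ``parent-edge $\mapsto$ slack'' rule. The redundancy you flag as the hard part (the box constraints on $E'$-coordinates in the reverse direction) is handled in the paper by a short path-chasing argument: if some $x_e$ with $e\in E'$ lies outside $[0,1]$, the $H^-$-inequality at the appropriate adjacent vertex forces another adjacent edge coordinate to lie outside $[0,1]$, and iterating eventually reaches an edge not in $E'$, a contradiction.
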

\begin{proof}
We can consider the polytope $Q_{G(I')//E'}\times[0,1]^{|E(F)|-l}$ to be naturally embedded in $\RR^{|E(F)|}$ by simply keeping the coordinates corresponding to edges that are in $G(I')$ and allowing all other coordinates to be any number from $[0,1]$.

Let $\varphi: E'\rightarrow [k]$ be an injective function such that for all $e\in E'$ we have $e\in \mathcal N_F(v_{\varphi(e)})$. Since such a function exists, then one can, for example, make the forest rooted and direct the edge away from the roots. Then $\varphi(e)$ assigns to every edge the index of the vertex to which $e$ is pointing. 

We will make a linear change of coordinates which will transform the polytope $\fC_F\cap\bigcap_{i=1}^k H^-_{v_i, A_i}$ to the polytope $Q_{G(I')//E'}\times[0,1]^{|E(F)|-l}$. 

The change of coordinates is as follows:
\begin{itemize}
    \item $x'_e=x_e$ $\forall e\not\in\bigcup_{i=1}^k A_i$,
    \item $x'_e=1-x_e$ $\forall e\in(\bigcup_{i=1}^k A_i)\setminus E'$,
    \item $x'_e=1-|A_{\varphi(e)}|-S_{v_{\varphi(e)},A_{\varphi(e)}}(x)$ $\forall e\in E'$.
\end{itemize}
 It is easy to see that this change of coordinates gives us the lattice isomorphism of $\ZZ^{|E(F)|}$. Thus, it preserves the volumes of polytopes.

It remains to verify that the image of the polytope $\fC_F\cap\bigcap_{i=1}^k H^-_{v_i,A_i}$ is the polytope $Q_{G(I')//E'}\times[0,1]^{|E(F)|-l}$. It is sufficient to check that a point $x\in \RR^{|E(F)|}$ satisfies all the inequalities from the facet description of $\fC_F\cap\bigcap_{i=1}^k H^-_{v_i,A_i}$ if and only if its image, the point $x'$, satisfies all inequalities from the facet description of $Q_{G(I')//E'}\times[0,1]^{|E(F)|-l}$.

Let us list all the inequalities from the facet description of both polytopes. For the intersection
$\fC_F\cap\bigcap_{i=1}^k H^-_{v_i,A_i}$:

\begin{itemize}
\item $0\le x_e\leq 1$ for all $e\in E(F)$,
\item For all $1\le i \le k$:
\[
\sum_{e\in\mathcal{N}_F(v_i)\setminus A_i} x_e - \sum_{e\in A_i} x_e\le 1-|A_i|.\]
\end{itemize}

The inequalities for $Q_{G(I')//E'}\times[0,1]^{|E(F)|-l}$: 
\begin{itemize}
\item $0\le x'_e\leq 1$ for all $e\in E(F)$,
\item For all $v\in I(G(I')//E')$:
\[
\sum_{e\in\mathcal{N}_{G(I')//E'}(v)} x_e\le 1.\]
\end{itemize}

Suppose that $x\in \fC_F\cap\bigcap_{i=1}^k H^-_{v_i,A_i}$. Clearly, we have $0\le x'_e\le 1$ for all $e\not\in E'$. Consider an edge $e\in E'$. Since $x\in H^-_{v_{\varphi(e)},A_{\varphi(e)}}$ we have $$
x'_e=1-|A_{\varphi(e)}|-S_{v_{\varphi(e)},A_{\varphi(e)}}(x)\ge 0.$$

On the other hand, $$x'_e=1-|A_{\varphi(e)}|-S_{v_{\varphi(e)},A_{\varphi(e)}}(x)=1-\sum_{e\in\mathcal{N}_F(v_i)\setminus A_i} x_e - \sum_{e\in A_i}(1-x_e)\le 1.$$

This proves that also for $e\in E'$, we have $0\le x'_e\le 1$. Now let us consider an inner vertex $v\in I(G(I')//E')$. From the definition of the graph $G(I'//E')$ there exist vertices $v_{i_0},\dots, v_{i_m}\in I', m\ge 0$, such that $v$ is a contraction of those vertices. This implies that all edges in the graph induced by $v_{i_0},\dots,v_{i_m}$ are in $E'$, and there are exactly $m$ such edges. Thus, there exists exactly one index $j$ such that there exists no edge $e\in E'$ for which $\varphi(e)=v_{i_j}$. Without loss of generality, this index is $j=0$. By definition, we have $\mathcal N_{G(I'//E')}(v)=\bigcup_{j=0}^m \mathcal N_F (v_{i_j})$.   Therefore, we obtain:

$$\sum_{e\in\mathcal{N}_{G(I')//E'}(v)} x'_e=\sum_{e\in\bigcup_{j=0}^m \mathcal N_F (v_{i_j})\setminus A_{i_j}} x_e+\sum_{e\in(\bigcup_{j=0}^m A_{i_j})\setminus E'} (1-x_e)+$$
$$+\sum_{e\in E'\cap \mathcal N_{G(I')//E'}(v)} \left(1-|A_{\varphi(e)}|-S_{v_{\varphi(e)},A_{\varphi(e)}}(x)\right)=  $$

$$=\sum_{j=0}^m\left(\sum_{e\in \mathcal N_F (v_{i_j})\setminus (A_{i_j}\cup E')} x_e+\sum_{e\in A_{i_j}\setminus E'} (1-x_e)\right)+$$
$$+\sum_{e\in E'\cap \mathcal N_{G(I')//E'}(v)} \left(x_e+(1-x_e)-|A_{\varphi(e)}|-S_{v_{\varphi(e)},A_{\varphi(e)}}(x)\right)=$$

$$=\sum_{j=0}^m\left(\sum_{e\in \mathcal N_F (v_{i_j})\setminus A_{i_j}} x_e+\sum_{e\in A_{i_j}} (1-x_e)\right)+$$
$$+\sum_{e\in E'\cap \mathcal N_{G(I')//E'}(v)} \left(-|A_{\varphi(e)}|-S_{v_{\varphi(e)},A_{\varphi(e)}}(x)\right)=
$$

$$=\sum_{j=0}^m\left( S_{v_{i_j},A_{i_j}}(x)+|A_{i_j}|\right)+\sum_{j=1}^{m} \left(-|A_{v_{i_j}}|-S_{v_{i_j},A_{i_j}}(x)\right)=
$$
$$=S_{v_{i_0},A_{i_0}}+|A_{v_{i_0}}|\le 1-|A_{v_{i_0}}|+|A_{v_{i_0}}|=1.$$

This shows that $x'\in Q_{G(I')//E'}\times[0,1]^{|E(F)|-l} $. Now we consider any $x'\in Q_{G(I')//E'}\times[0,1]^{|E(F)|-l} $. Clearly $0\le x_e\le 1$ for all $e\not\in E'$. Next, let us consider a vertex $v_{i_0}$, such that there exists an edge $e_0\in E'$ for which $v_{i_0}=\varphi(e_0)$. Then

$$S_{v_{i_0}, A_{i_0}}(x)=1-|A_{i_0}|-x'_e\le 1-|A_{i_0}|,$$
which shows that the inequality holds for the pair $(v_{i_0}, A_{i_0})$. Consider now a vertex $v_{i_0}$ such that there is no edge $e\in E'$ for which $v_i=\varphi(e)$.  In the graph $G(I')//E'$, there exists a vertex $v$ such that $v$ is a contraction of vertices $v_{i_0},\dots, v_{i_m}$. As in the previous argument, we have

$$\sum_{e\in\mathcal{N}_{G(I')//E'}(v)} x'_e=S_{v_{i_0},A_{i_0}}+|A_{v_{i_0}}|.$$

This implies that $$S_{v_{i_0},A_{i_0}}=\sum_{e\in\mathcal{N}_{G(I')//E'}(v)} x'_e-|A_{v_{i_0}}|\le 1-|A_{v_{i_0}}|.$$

It remains to note that the inequalities $0\le x_e\le 1$ for $e\in E'$ are redundant in the description of $\fC_F\cap\bigcap_{i=1}^k H^-_{v_i,A_i}$, i.e. they do not give us facets. To prove this, assume that for a point $x$ all the other inequalities from the facet description of $\fC_F\cap\bigcap_{i=1}^k H^-_{v_i,A_i}$ hold, but there exists an edge $e\in E'$ such that $x_{e}\not\in [0,1]$.

Consider any such edge $e_0$ for which $x_{e_0}>1$. Then there exists an index $j$ such that the vertex $v_j$ is adjacent to $e_0$ and $e_0\not\in A_j$. Thus, $$\sum_{e\in\mathcal{N}_F(v_j)\setminus A_j} x_e - \sum_{e\in A_j} x_e\le 1-|A_j|\Leftrightarrow \sum_{e\in (\mathcal{N}_F(v_j)\setminus A_j\cup\{e_0\})} x_e + \sum_{e\in A_j} (1-x_e)\le 1-x_{e_0}. $$ 

Therefore, there must exist another edge $e_1$ such that either $x_{e_1}<0$ and $e_1\not\in A_j$ or $x_{e_1}>1$ and $e_1\in A_j$.

Similarly, if we have an edge $e_0$ with $x_{e_0}<0$, we find an index $j$ such that the vertex $v_j$ is adjacent to $e_0$ and $e_0\in A_j$. Analogously, we get the conclusion that there exists an edge $e_1\in \mathcal N_F (v_j)$ such that either $x_{e_1}<0$ and $e_1\not\in A_j$ or $x_{e_1}>1$ and $e_1\in A_j$.

We see that starting in some edge $e_0$, we can create a path of edges $e_0,e_1,\dots$ such that for all of these edges, we have $x_{e_i}\not\in [0,1]$. Eventually, we must get to the situation where one of these edges is not in $E'$, which is a contradiction.

This also shows that, for a point $x'\in Q_{G(I')//E'}\times[0,1]^{|E(F)|-l}$, we have $x\in \fC_F\cap\bigcap_{i=1}^k H^-_{v_i,A_i}$ which finishes the proof.

\end{proof}

For a forest $F$, let us denote $$r(F):=\sum_{E'\subset IE(F)}V_{|E(F)|}(Q_{F//E'}).$$
\begin{Lemma}\label{Q-product-trees}
 Let $F$ be a forest with connected components $T_1,\dots, T_k$. Then $$r(F)=\prod_{i=1}^k r(T_i).$$
\end{Lemma}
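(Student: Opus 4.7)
The plan is to exploit two compatible multiplicativities: subsets of $IE(F)$ factor through the connected components, and volumes of the polytopes $Q$ factor via Lemma~\ref{P-product-trees}.

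First I would observe that since $T_1,\dots,T_k$ are the connected components of $F$, every edge of $F$ belongs to exactly one $T_i$, and in particular every inner edge lies inside a single component. Hence
$$IE(F)=\bigsqcup_{i=1}^k IE(T_i),$$
and the assignment $E'\mapsto (E'_1,\dots,E'_k)$, where $E'_i:=E'\cap IE(T_i)$, is a bijection between subsets $E'\subset IE(F)$ and tuples $(E'_1,\dots,E'_k)$ with $E'_i\subset IE(T_i)$.

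Next I would check that the contraction operation $//$ respects the decomposition into components: since each $e\in E'$ is internal to some $T_i$, performing $//e$ only modifies that component. Therefore the forest $F//E'$ has connected components precisely $T_1//E'_1,\dots,T_k//E'_k$. Moreover the $//$ operation preserves the total edge count, so $|E(F//E')|=|E(F)|=\sum_i|E(T_i)|=\sum_i|E(T_i//E'_i)|$. Applying Lemma~\ref{P-product-trees} to the forest $F//E'$ gives
$$V_{|E(F)|}(Q_{F//E'})=\prod_{i=1}^{k} V_{|E(T_i)|}\bigl(Q_{T_i//E'_i}\bigr).$$

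Finally, I would substitute this into the definition of $r(F)$ and use the bijection above to interchange the sum over $E'\subset IE(F)$ with a product of sums:
$$r(F)=\sum_{E'\subset IE(F)}\prod_{i=1}^k V_{|E(T_i)|}(Q_{T_i//E'_i})=\prod_{i=1}^k \sum_{E'_i\subset IE(T_i)}V_{|E(T_i)|}(Q_{T_i//E'_i})=\prod_{i=1}^k r(T_i),$$
which is exactly the desired identity. There is no real obstacle here; the entire argument is bookkeeping, and the only thing requiring a line of justification is the (essentially tautological) claim that $F//E'$ decomposes componentwise, which holds because every contracted edge lies strictly inside a single $T_i$.
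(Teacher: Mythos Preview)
Your proposal is correct and follows essentially the same approach as the paper: both arguments split $IE(F)$ componentwise, observe that $F//E'$ decomposes into the $T_i//E'_i$, apply Lemma~\ref{P-product-trees} to factor the volume, and then distribute the product over the sum. The only difference is that you spell out the bijection and the componentwise behaviour of $//$ a bit more carefully than the paper does.
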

\begin{proof}
$$r(F)=\sum_{E'\subset IE(F)}V_{|E(F)|}(Q_{F//E'})=\sum_{E'_1\subset IE(T_1)}\dots\sum_{E'_k\subset IE(T_k)}V_{|E(F)|}(Q_{F//(E'_1\cup\dots\cup E'_k}))=$$
$$=\sum_{E'_1\subset IE(T_1)}\dots\sum_{E'_k\subset IE(T_k)}V_{|E(F)|}(Q_{(T_1//E'_1)\oplus\dots\oplus (T_k//E'_k)})=$$
$$=\sum_{E'_1\subset IE(T_1)}\dots\sum_{E'_k\subset IE(T_k)}\prod_{i=1}^kV_{|E(T_i)|}(Q_{T_i//E'_i})=\prod_{i=1}^k\sum_{E'_i\subset IE(T_k)}V_{|E(T_i)|}(Q_{T_i//E'_i})=\prod_{i=1}^k r(T_i).$$
We use Lemma \ref{P-product-trees} for the volume of $Q_{(T_1//E'1)\oplus\dots\oplus (T_k//E'_k)}$, where $\oplus$ stands for a disjoint union of graphs.
\end{proof}

\begin{Lemma}\label{qstar}
For all $m,n\ge 1$ we have 
\begin{enumerate}[label=(\alph*)]
    \item $r(S_n)=1/n!.$
    \item  $r(S_{m,n})=\frac{1}{(m+n+1)m!n!}+\frac{1}{(m+n+1)!}.$
\end{enumerate}
\end{Lemma}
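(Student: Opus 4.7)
The plan is to evaluate $r$ on these two graph families by directly unpacking the definition
$$r(F)=\sum_{E'\subset IE(F)}V_{|E(F)|}(Q_{F//E'})$$
and invoking the volume computations already available from Corollaries \ref{P-star} and \ref{P-doublestar}. In both parts the set $IE(F)$ is so small that the sum has very few terms, so no genuine recursion is needed.

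For part (a), I note that the star $S_n$ has a single inner vertex, so there are no inner edges, i.e.\ $IE(S_n)=\emptyset$. The sum defining $r(S_n)$ collapses to the single term with $E'=\emptyset$, giving $r(S_n)=V_n(Q_{S_n})=1/n!$ by Corollary \ref{P-star}.

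For part (b), the double star $S_{m,n}$ has exactly two inner vertices joined by a unique inner edge $e_0$, so $IE(S_{m,n})=\{e_0\}$ and the sum has two terms. The term with $E'=\emptyset$ gives $V_{m+n+1}(Q_{S_{m,n}})=\tfrac{1}{(m+n+1)m!n!}$ by Corollary \ref{P-doublestar}. For the term with $E'=\{e_0\}$, I would observe that contracting $e_0$ in $S_{m,n}$ identifies the two inner vertices into one, and then the definition of $F//e$ re-attaches $e_0$ as a pendant leaf at that merged vertex. The resulting graph $S_{m,n}//e_0$ therefore has a single inner vertex of degree $m+n+1$, so it is isomorphic to $S_{m+n+1}$; Corollary \ref{P-star} then yields $V_{m+n+1}(Q_{S_{m,n}//e_0})=\tfrac{1}{(m+n+1)!}$. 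Summing the two contributions gives the claimed formula.

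The only non-routine step is the identification $S_{m,n}//e_0\cong S_{m+n+1}$, and this is purely a matter of carefully reading the definition of the $//$ operation (contract the edge but retain it as an extra leaf at the contracted vertex); once that is clear, both statements follow immediately from previously established lemmas, so I would not expect any real obstacle here beyond making this identification explicit.
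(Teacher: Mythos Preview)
Your proposal is correct and follows essentially the same approach as the paper's proof: both unfold the definition of $r(F)$, note that $IE(S_n)=\emptyset$ and $|IE(S_{m,n})|=1$, and then quote Corollaries \ref{P-star} and \ref{P-doublestar}. You are slightly more explicit than the paper in justifying the identification $S_{m,n}//e_0\cong S_{m+n+1}$, which the paper simply writes down without comment.
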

\begin{proof}
We use the definition of $r(F)$. Since $S_n$ has no inner edges, we have $r(S_n)=V_n(Q_{S_n})=1/n!$.

The double star $S_{m,n}$ has one inner edge, hence, we obtain
$$r(S_{m,n})=V_{m+n+1}(Q_{S_{m,n}})+V_{m+n+1}(Q_{S_{m+n+1}})=\frac{1}{(m+n+1)m!n!}+\frac{1}{(m+n+1)!}.$$
We use Corollary \ref{P-star} and \ref{P-doublestar} for the volumes of polytopes $Q_{S_{m+n+1}}$ and $Q_{S_{m,n}}$.
\end{proof}

\begin{Example}\label{r(T)}
We illustrate the computation of $r(T)$ for the tree $T$ presented in Example~\ref{example-P}. We refer now to Figure~\ref{102}.
\vspace{.1in}
\begin{figure}[H]
\centering
\begin{tikzpicture}
\draw [black, thick] (2,1)--(1,2);
\draw [black, thick] (1,0)--(2,1);
\draw [black, thick, red] (2,1)--(4,1);
\node [right, red, ultra thick] at (2.6,1.3) {$e_1$};
\draw [black, thick] (4,1)--(4,2);
\draw [red, thick] (4,1)--(6,1);
\draw [black, thick] (6,1)--(7,0);
\node [right, red, ultra thick] at (4.6,1.3) {$e_4$};
\draw [black, thick] (7,2)--(6,1);
%\node [right, black, ultra thick] at (5.8,0.3) {$e_3$};
%\node [right, red, ultra thick] at (5.8,1.7) {$e_2$};
%\node [right, red, ultra thick] at (\x-0.2,-0.3) {\x};}
\end{tikzpicture}
\caption{}
\label{102}
\end{figure}
The set of inner edges $IE(T)=\{e_1,e_4\}$ has four subsets. Thus
$$r(T)=V_7(Q_T)+V_7(Q_{S_{4,2}})+V_7(Q_{S_{2,4}})+V_7(Q_{S_7})=
\frac{71}{7!}+\frac{2}{2!\cdot4!\cdot 7}+\frac{1}{7!}=\frac{102}{7!}.$$

\end{Example}

\begin{Lemma}\label{Q-splitting-formula}
Let $F$ be a forest with an inner edge $e_0$ connecting the vertices $v$ and $w$. Let $e_1$ be an edge adjacent to $v$ such that its other end is a leaf. Assume that the degree of $v$ is at least 3. Then the following formula holds (see also Figure~\ref{picture-Qsplit}):

$$
r(F)=-r([F\setminus e_1,w,1])+r([F\setminus e_0,w,1])+2r(F//e_0).$$

\begin{figure}[H]
\begin{minipage}[c]{0.5\linewidth}
\centering
\begin{tikzpicture}[scale=0.85]
%\foreach \x in {2,6}{
%\filldraw [red] (\x,1) circle (3pt);}
\draw [black, thick] (1,0)--(2,1)--(1,2);
\draw [black, thick] (2,1)--(4,1);
\draw [black, thick] (2,1)--(0.5,1.5);
\draw [black, thick] (2,1)--(0.5,0.5);
\draw [black, thick] (0,1)--(2,1);
\draw [red, thick] (2,1)--(2,2);
\draw [black, thick] (4,1)--(5,2)--(4,1)--(6,1)--(4,1)--(5,0);
\draw [black, thick] (4,1)--(5.5,1.5);
\draw [black, thick] (4,1)--(5.5,0.5);
\node [right, black, ultra thick] at (0.5,1.7) {$T_2$};
\node [right, black, ultra thick] at (1,2) {$T_1$};
\node [right, black, ultra thick] at (0,1.2) {$T_3$};
\node [right, black, ultra thick] at (-0.1,0.4) {$...$};
\node [right, black, ultra thick] at (1,0) {$T_k$};

\node [right, black, ultra thick] at (5.5,0.5) {$....$};
\node [right, black, ultra thick] at (5.5,1.5) {$T_{k+2}$};
\node [right, black, ultra thick] at (5,2) {$T_{k+1}$};
\node [right, black, ultra thick] at (6,1) {$T_{k+3}$};
\node [right, black, ultra thick] at (5, 0) {$T_{l}$};
\node [right, blue, ultra thick] at (7,1) {\huge $=$};

\end{tikzpicture}
\end{minipage}\hfill
\begin{minipage}[c]{0.5\linewidth}
\centering
\begin{tikzpicture}[scale=0.85]

%\foreach \x in {2,6}{
%\filldraw [red] (\x,1) circle (3pt);}
\draw [black, thick] (1,0)--(2,1)--(1,2);
\draw [black, thick] (2,1)--(4,1);
\draw [black, thick] (2,1)--(0.5,1.5);
\draw [black, thick] (2,1)--(0.5,0.5);
\draw [black, thick] (0,1)--(2,1);
%\draw [red, thick] (2,1)--(2,2);
\draw [black, thick] (4,1)--(5,2)--(4,1)--(6,1)--(4,1)--(5,0);
\draw [black, thick] (4,1)--(5.5,1.5);
\draw [black, thick] (4,1)--(5.5,0.5);
\node [right, black, ultra thick] at (0.5,1.7) {$T_2$};
\node [right, black, ultra thick] at (1,2) {$T_1$};
\node [right, black, ultra thick] at (0,1.2) {$T_3$};
\node [right, black, ultra thick] at (-0.1,0.4) {$...$};
\node [right, black, ultra thick] at (1,0) {$T_k$};
\draw [red, thick] (4,1)--(4,2);
\node [right, black, ultra thick] at (5.5,0.5) {$....$};
\node [right, black, ultra thick] at (5.5,1.5) {$T_{k+2}$};
\node [right, black, ultra thick] at (5,2) {$T_{k+1}$};
\node [right, black, ultra thick] at (6,1) {$T_{k+3}$};
\node [right, black, ultra thick] at (5, 0) {$T_{l}$};
\node [right, blue, ultra thick] at (-1,1) {\huge $-$};
\end{tikzpicture}
\end{minipage}
\end{figure}
\begin{figure}[H]
\begin{minipage}[c]{0.5\linewidth}
\centering
\begin{tikzpicture}[scale=0.85]
%\foreach \x in {2,6}{
%\filldraw [red] (\x,1) circle (3pt);}
\draw [black, thick] (1,0)--(2,1)--(1,2);
%\draw [black, thick] (2,1)--(4,1);
\draw [black, thick] (2,1)--(0.5,1.5);
\draw [black, thick] (2,1)--(0.5,0.5);
\draw [black, thick] (0,1)--(2,1);
\draw [red, thick] (2,1)--(2,2);
\draw [black, thick] (4,1)--(5,2)--(4,1)--(6,1)--(4,1)--(5,0);
\draw [black, thick] (4,1)--(5.5,1.5);
\draw [black, thick] (4,1)--(5.5,0.5);
\node [right, black, ultra thick] at (0.5,1.7) {$T_2$};
\node [right, black, ultra thick] at (1,2) {$T_1$};
\node [right, black, ultra thick] at (0,1.2) {$T_3$};
\node [right, black, ultra thick] at (-0.1,0.4) {$...$};
\node [right, black, ultra thick] at (1,0) {$T_k$};
\draw [red, thick] (4,1)--(4,2);
\node [right, black, ultra thick] at (5.5,0.5) {$....$};
\node [right, black, ultra thick] at (5.5,1.5) {$T_{k+2}$};
\node [right, black, ultra thick] at (5,2) {$T_{k+1}$};
\node [right, black, ultra thick] at (6,1) {$T_{k+3}$};
\node [right, black, ultra thick] at (5, 0) {$T_{l}$};
%\node [right, black, ultra thick] at (7,1) {$$};
\node [right, blue, ultra thick] at (-1,1.1) {\huge $+$};
\end{tikzpicture}
\end{minipage}\hfill
\begin{minipage}[c]{0.5\linewidth}
\centering
\begin{tikzpicture}
%\foreach \x in {2,6}{
%\filldraw [red] (\x,1) circle (3pt);}
\draw [black, thick] (1,0)--(2,1)--(1,2);
%\draw [black, thick] (2,1)--(4,1);
\draw [black, thick] (2,1)--(0.5,1.5);
\draw [black, thick] (2,1)--(0.5,0.5);
\draw [black, thick] (0,1)--(2,1);
%\draw [red, thick] (2,1)--(2,2);
\draw [black, thick] (2,1)--(3,2)--(2,1)--(4,1)--(2,1)--(3,0);
\draw [black, thick] (2,1)--(3.5,1.5);
\draw [black, thick] (2,1)--(3.5,0.5);
\node [right, black, ultra thick] at (0.5,1.7) {$T_2$};
\node [right, black, ultra thick] at (1,2) {$T_1$};
\node [right, black, ultra thick] at (0,1.2) {$T_3$};
\node [right, black, ultra thick] at (-0.1,0.4) {$...$};
\node [right, black, ultra thick] at (1,0) {$T_k$};
\draw [red, thick] (2,1)--(2.4,2);
\draw [red, thick] (2,1)--(1.6,2);
\node [right, black, ultra thick] at (3.5,0.5) {$....$};
\node [right, black, ultra thick] at (3.5,1.5) {$T_{k+2}$};
\node [right, black, ultra thick] at (3,2) {$T_{k+1}$};
\node [right, black, ultra thick] at (4,1) {$T_{k+3}$};
\node [right, black, ultra thick] at (3, 0) {$T_{l}$};
%\node [right, black, ultra thick] at (7,1) {$$};
\node [right, blue, ultra thick] at (-1.2,1.1) {\huge $+2\cdot$};
\end{tikzpicture}
\end{minipage}
\caption{Recursive formula for $r(F)$}
\label{picture-Qsplit}
\end{figure}
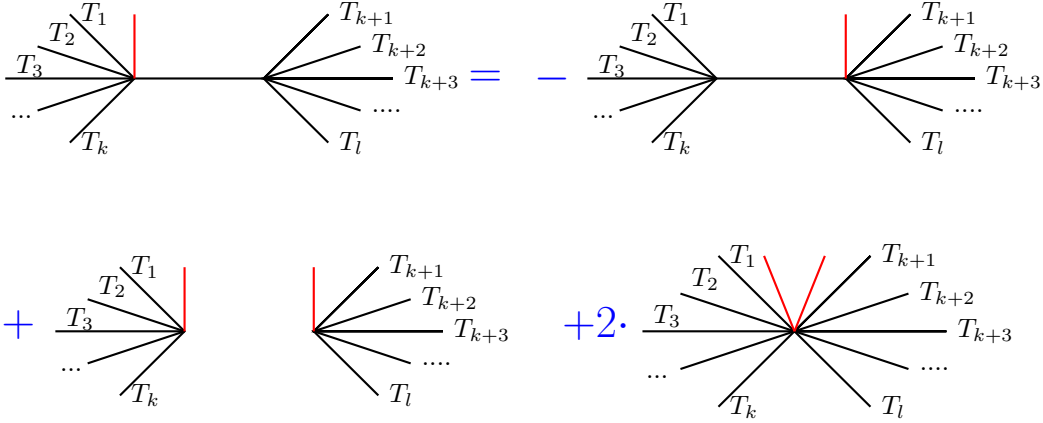
\end{Lemma}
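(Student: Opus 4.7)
The plan is to apply Corollary~\ref{P-splitting-formula} termwise to the sum defining $r(F)$ and then reorganize. Start by partitioning
\[
r(F) = \sum_{E' \subset IE(F)} V_{|E(F)|}(Q_{F//E'})
\]
according to whether $e_0 \in E'$. Substituting $E' = E'' \cup \{e_0\}$ and using $F//(E''\cup\{e_0\}) = (F//e_0)//E''$ together with $IE(F//e_0) = IE(F)\setminus\{e_0\}$, the contribution of the terms with $e_0 \in E'$ is exactly $r(F//e_0)$.

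For each $E' \subset IE(F)\setminus\{e_0\}$, the forest $F//E'$ still contains $e_0$ as an inner edge and $e_1$ as a leaf-edge incident to the (possibly merged) cluster of $v$, whose degree is at least $|\mathcal N_F(v)| \geq 3$; hence Corollary~\ref{P-splitting-formula} applies. Since $e_0$, $e_1$, and the leaf we will add at $w$ are disjoint from every contraction in $E' \subset IE(F)\setminus\{e_0\}$, the operations of contraction, edge removal, and leaf addition all commute, so
\[
V_{|E(F)|}(Q_{F//E'}) = -V_{|E(F)|}(Q_{F_1//E'}) + V_{|E(F)|}(Q_{F_2//E'}),
\]
where $F_1 := [F \setminus e_1, w, 1]$ and $F_2 := [F \setminus e_0, w, 1]$. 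Since $IE(F_2) = IE(F)\setminus\{e_0\}$, summing over $E'$ identifies the positive part as $r(F_2)$, yielding
\[
r(F) = -\sum_{E'\subset IE(F)\setminus\{e_0\}} V_{|E(F)|}(Q_{F_1//E'}) + r(F_2) + r(F//e_0).
\]

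The main step is to identify the remaining sum as $r(F_1) - r(F//e_0)$. The degree hypothesis ensures $v$ remains an inner vertex of $F_1$, so $e_0$ stays an inner edge and $IE(F_1) = IE(F)$. Splitting $r(F_1)$ according to whether $e_0 \in E'$ gives
\[
r(F_1) = \sum_{E'\subset IE(F)\setminus\{e_0\}} V_{|E(F)|}(Q_{F_1//E'}) + \sum_{E'\subset IE(F)\setminus\{e_0\}} V_{|E(F)|}(Q_{(F_1//e_0)//E'}).
\]
The key observation is the graph isomorphism $F_1//e_0 \cong F//e_0$: both are obtained from $F$ by merging $v$ and $w$ into a single vertex $u$ carrying all edges of $\mathcal N_F(v)\cup\mathcal N_F(w)$ except $e_0$, plus two additional leaf-edges at $u$ (namely $e_1$ and the leaf from contracting $e_0$ in $F//e_0$; the leaf added at $w$ and the leaf from contracting $e_0$ in $F_1//e_0$), and these two leaf-edges are indistinguishable as abstract edges. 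Hence $V_{|E(F)|}(Q_{(F_1//e_0)//E'}) = V_{|E(F)|}(Q_{(F//e_0)//E'})$ for every $E'$, the second sum equals $r(F//e_0)$, and we conclude $\sum V_{|E(F)|}(Q_{F_1//E'}) = r(F_1) - r(F//e_0)$. Substituting into the earlier expression yields $r(F) = -r(F_1) + r(F_2) + 2r(F//e_0)$.

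I expect the graph isomorphism $F_1//e_0 \cong F//e_0$ to be the only nontrivial idea; everything else is routine bookkeeping with the definitions, chiefly the commutation of contraction with edge removal and leaf addition whenever the edges being manipulated are disjoint.
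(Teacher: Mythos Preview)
Your proof is correct and follows essentially the same approach as the paper's. Both arguments split the defining sum for $r(F)$ according to whether $e_0\in E'$, apply Corollary~\ref{P-splitting-formula} termwise to the $e_0\notin E'$ part, and then use the identification $[F\setminus e_1,w,1]//e_0\cong F//e_0$ (which the paper records simply as $r([F\setminus e_1,w,1]//e_0)=r(F//e_0)$) to rewrite the residual sum as $r(F_1)-r(F//e_0)$. Your write-up is slightly more explicit about the commutation of the graph operations and about the isomorphism, but the logical skeleton is identical.
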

\begin{proof}
In the definition of $r(Q_F)$ we split the sum to those sets $E'$ that contain edge $e_0$ and those sets that do not:

$$r(F)=\sum_{E'\subset IE(F)\setminus\{e_0\}}V_{|E(F)|}(Q_{F//E'})+\sum_{E'\subset IE(F), e_0\in E'}V_{|E(F)|}(Q_{F//E'}).$$

First, we look at the first term. For sets $E'$ that do not contain $e_0$, by Lemma~\ref{P-splitting-formula}, we obtain

$$V_{|E(F)|}(Q_{F//E'})=-V_{|E(F)|}(Q_{[F\setminus e_1,w,1]//E'})+V_{|E(F)|}(Q_{[F\setminus e_0,w,1]//E'}).$$

If we sum up these equations for all sets $E'\subset IE(F)\setminus\{e_0\}$, we get

$$\sum_{E'\subset IE(F)\setminus\{e_0\}}V_{|E(F)|}(Q_{F//E'})=$$
$$\sum_{E'\subset IE(F)\setminus\{e_0\}}-V_{|E(F)|}(Q_{[F\setminus e_1,w,1]//E'})+\sum_{E'\subset IE(F)\setminus\{e_0\}}V_{|E(F)|}(Q_{[F\setminus e_0,w,1]//E'})=$$
$$\sum_{E'\subset IE(F)\setminus\{e_0\}}-V_{|E(F)|}(Q_{[F\setminus e_1,w,1]//E'})+r([F\setminus e_0,w,1]).$$

For sets $E'$ that contain edge $e_0$, we have $F//E'=(F//e_0)//(E'\setminus \{e_0\})$, therefore

$$\sum_{E'\subset IE(F), e_0\in E'}V_{|E(F)|}(Q_{F//E'})=\sum_{E'\subset IE(F)\setminus\{e_0\}}V_{|E(F)|}(Q_{(F//e_0)//E'})=r(F//e_0).$$

Since the degree of $v$ is at least three, the edge $e_0$ is still an inner edge in $[F\setminus e_1,w,1]$. Analogously to the case of forest $F$, we have 
$$\sum_{E'\subset IE(F), e_0\in E'}V_{|E(F)|}(Q_{[F\setminus e_1,w,1]//E'})=r([F\setminus e_1,w,1]//e_0)=r(F//e_0).$$

Using the last three equations, we obtain:

$$r(F)=\sum_{E'\subset IE(F)\setminus\{e_0\}}V_{|E(F)|}(Q_{F//E'})+\sum_{E'\subset IE(F), e_0\in E'}V_{|E(F)|}(Q_{F//E'})=$$
$$=-\sum_{E'\subset IE(F)\setminus\{e_0\}}V_{|E(F)|}(Q_{[F\setminus e_1,w,1]//E'})+r([F\setminus e_1,w,1])+r(F//e_0)=
$$
$$-\sum_{E'\subset IE(F)\setminus\{e_0\}}V_{|E(F)|}(Q_{[F\setminus e_1,w,1]//E'})+r([F\setminus e_0,w,1])+r(F//e_0)+$$
$$+\left(-\sum_{E'\subset IE(F),e_0\in E'}V_{|E(F)|}(Q_{[F\setminus e_1,w,1]//E'})+r(F//e_0)\right)=$$
$$=r([F\setminus e_1,w,1])+r([F\setminus e_0,w,1])+2r(F//e_0).$$
\end{proof}

Now we can express the volume of the polytope $P_T$ using the function $r(F)$.
\begin{Theorem}\label{V-formula}
For any tree $T$, we have
$$V_{|E(T)|}(P_T)=\sum_{I'\subset I(T)}(-1)^{|I'|}\cdot r(G(I'))\cdot 2^{|E(G(I'))|-|I'|}.
$$   
\end{Theorem}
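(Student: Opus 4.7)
The plan is to start from the facet description
$P_T=\fC_T\setminus\bigcup_{v\in I(T),A\in\mathcal{P}_{odd}(\mathcal N_T(v))}H^-_{v,A}$
of Theorem~\ref{facetdescription_Z2} and expand via inclusion-exclusion, so that $V_{|E(T)|}(P_T)$ becomes an alternating sum of volumes of intersections $\fC_T\cap\bigcap_{(v,A)\in\mathcal S}H^-_{v,A}$ indexed by subsets $\mathcal S$ of pairs $(v,A)$. By Lemma~\ref{0volume}, any $\mathcal S$ that assigns two different odd subsets to the same inner vertex $v$ produces a not full-dimensional intersection, hence contributes $0$. The surviving families are therefore parametrized by a subset $I'\subset I(T)$ together with one choice $A_v\in\mathcal{P}_{odd}(\mathcal N_T(v))$ per $v\in I'$, and their sign is $(-1)^{|I'|}$.

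Next, I would apply Lemma~\ref{beautiful} with $F=T$ to rewrite each surviving term as $V_{|E(G(I'))|}(Q_{G(I')//E'})$, where $E'=E'(I',(A_v))$ is the mismatch set defined there (a quick check shows $E'\subset IE(G(I'))$: in a tree, any edge contributing to $E'$ must have both endpoints in $I'$). For fixed $I'$, I reorganize the inner sum by the value of $E'$:
$$\sum_{(A_v)_{v\in I'}}V_{|E(G(I'))|}(Q_{G(I')//E'})=\sum_{E'\subset IE(G(I'))}N(I',E')\,V_{|E(G(I'))|}(Q_{G(I')//E'}),$$
with $N(I',E')$ the number of tuples $(A_v)$ producing $E'$. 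Once it is established that $N(I',E')=2^{|E(G(I'))|-|I'|}$ for every $E'$, the inner sum collapses to $2^{|E(G(I'))|-|I'|}\cdot r(G(I'))$ by the definition of $r$, and summing over $I'$ recovers the stated formula.

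The main obstacle is the combinatorial count $N(I',E')=2^{|E(G(I'))|-|I'|}$. Unpacking the definition, an edge $e\in IE(G(I'))$ (whose endpoints $v,w$ both lie in $I'$, since $T$ is a tree) belongs to $E'$ exactly when $e$ lies in precisely one of $A_v,A_w$. Introduce the auxiliary forest $\Gamma$ on vertex set $I'$ with edge set $IE(G(I'))$, and choose a root $r$ in each of its $t$ components; for a non-root $v\in I'$, write $p(v)$ for its parent in $\Gamma$ and $e(v)$ for the parent edge. Every valid tuple is then produced bijectively by: (i) freely choosing $A_r\subset\mathcal N_T(r)$ of odd cardinality at each root ($2^{d_r-1}$ options, where $d_v:=|\mathcal N_T(v)|$); and (ii) at each non-root $v$, freely choosing $A_v\cap(\mathcal N_T(v)\setminus\{e(v)\})$ with the parity dictated by the oddness of $|A_v|$ ($2^{d_v-2}$ options), since the parent-edge coordinate $[e(v)\in A_v]$ is forced by $[e(v)\in A_{p(v)}]$ and by whether $e(v)\in E'$. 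Multiplying these counts and using $\sum_{v\in I'}d_v=|E(G(I'))|+|IE(G(I'))|$ together with $|IE(G(I'))|=|I'|-t$ (since $\Gamma$ is a forest with $|I'|$ vertices and $t$ components), all dependence on the root choice and on $t$ cancels, leaving exactly $2^{|E(G(I'))|-|I'|}$ regardless of $E'$. Assembling the three steps then yields the theorem.
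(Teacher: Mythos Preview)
Your proposal is correct and follows essentially the same route as the paper: inclusion--exclusion from the facet description, Lemma~\ref{0volume} to reduce to one odd subset per chosen inner vertex, Lemma~\ref{beautiful} to identify each surviving term with $V_{|E(G(I'))|}(Q_{G(I')//E'})$, and the constant count $N(I',E')=2^{|E(G(I'))|-|I'|}$ to collapse the inner sum to $r(G(I'))$. The only difference is that the paper simply asserts this count, whereas you supply a clean rooted-forest argument for it; your arithmetic $\sum_{v\in I'}d_v=|E(G(I'))|+|IE(G(I'))|$ and $|IE(G(I'))|=|I'|-t$ checks out and makes the independence from $E'$ transparent.
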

\begin{proof}

Let us recall that $$P_T=\fC_T\setminus\bigcup_{v\in I(T),A\in\mathcal{P}_{odd}(\mathcal{N}_T(v))} H^-_{v,A}. $$

 Let us denote $\mathcal M:=\{(v,A)\:\ v\in I(T), A\in\mathcal{P}_{odd}(\mathcal{N}_T(v))\} $. Also, for a set of inner vertices $I'\subset I(T)$ let us denote $$\mathcal A(I):=\{A\:\ I\rightarrow \mathcal P(E(T));\ \forall v\in I: A(v)\in \mathcal{P}_{odd}(\mathcal{N}_T(v))\}.$$
Moreover for $I'\subset I(T)$, $A\in \mathcal A(I)$, we denote $$\Gamma(A):=\{e=\{v,w\}\in IE(G(I'))\:\ e\in A(v)\triangle A(w)\},$$
where $\triangle$ stands for the symmetric difference of two sets.
 We will use the principle of inclusion and exclusion to compute the volume of $P_T$.

$$V_{|E(T)|}(P_T)=\sum_{M\subset \mathcal M} (-1)^{|M|} V_{|E(T)|}
\left( \fC_T \cap  \bigcap_{(v,A)\in M}H^-_{v,A}\right)= $$

$$=\sum_{I'\subset I(T)} (-1)^{|I'|}\sum_{A\in\mathcal A(I)} V_{|E(T)|}
\left( \fC_T \cap  \bigcap_{v\in I}H^-_{v,A(v)}\right)=$$

$$=\sum_{I'\subset I(T)} (-1)^{|I'|}\sum_{A\in\mathcal A(I)} V_{|E(G(I'))|}
\left(Q_{G(I')//\Gamma(A)}\right)=$$

$$=\sum_{I'\subset I(T)} (-1)^{|I'|}\sum_{E'\in IE(G(I'))}\sum_{\substack{A\in\mathcal A(I) \\ \Gamma(A)=E'}}V_{|E(G(I'))|}
\left(Q_{G(I')//E'}\right)=$$

$$=\sum_{I'\subset I(T)} (-1)^{|I'|}\sum_{E'\in IE(G(I'))} 2^{|E(G(I'))|-|I'|}\cdot V_{|E(G(I'))|}
\left(Q_{G(I')//E'}\right)=$$

$$=\sum_{I'\subset I(T)} (-1)^{|I'|} \cdot 2^{|E(G(I'))|-|I'|} \sum_{E'\in IE(G(I'))} V_{|E(G(I'))|}
\left(Q_{G(I')//E'}\right)=$$

$$=\sum_{I'\subset I(T)} (-1)^{|I'|} \cdot 2^{|E(G(I'))|-|I'|} \cdot r(G(I')).$$

In the first equality, we use Lemma~\ref{0volume} to sum only through such sets $M\in \mathcal M$ for which all vertices $v$ in pairs $(v,A)$ in $M$ are pairwise different.
Then we use Lemma~\ref{beautiful} to express the volume of $\fC_T \cap  \bigcap_{v\in I}H^-_{v,A(v)}$ and, in the end, we use the fact that there are exactly $2^{|E(G(I'))|-|I'|}$ functions $A\in \mathcal A(I)$ for which $\Gamma(A)=E'$.
\end{proof}
\begin{Remark}
Note that in the sum in Theorem \ref{V-formula} we always have a term for $I'=\emptyset$. Since from our definitions $V_0(Q_{\emptyset})=r(\emptyset)=1$, the corresponding term will always be equal to one. It corresponds to the volume of the cube $\fC_T$. A careful reader may verify that everything we have done so far works even in the case of the empty graph.
\end{Remark}

 \begin{Corollary}\label{Vstar}
For all $m,n\ge 1$ we have 
\begin{enumerate}[label=(\alph*)]
    \item $V_n(P_{S_n})=1-2^{n-1}/n!.$
    \item  $V_{m+n+1}(P_{S_{m,n}})=1-2^m/(m+1)!-2^n/(n+1)! +2^{m+n-1}\cdot \left(\frac{1}{(m+n+1)m!n!}+\frac{1}{(m+n+1)!}\right).$
\end{enumerate}   
 \end{Corollary}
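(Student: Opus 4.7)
The plan is to apply Theorem~\ref{V-formula} directly to $T=S_n$ and $T=S_{m,n}$. Since both trees have very few inner vertices, the sum over $I'\subset I(T)$ reduces to a small number of explicit terms, each of which is computable via Lemma~\ref{qstar}. So the entire proof is essentially an enumeration of subsets of $I(T)$ combined with the identification of the forest $G(I')$ in each case.

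For part (a), the star $S_n$ has exactly one inner vertex $v$, so $I'$ ranges over $\emptyset$ and $\{v\}$. The empty subset contributes $r(G(\emptyset))\cdot 2^{0}=1$, matching the volume of the unit cube $\fC_{S_n}$. For $I'=\{v\}$, we have $G(I')=S_n$ itself with $|E(G(I'))|=n$, so the contribution is $(-1)^{1}\cdot r(S_n)\cdot 2^{n-1}=-2^{n-1}/n!$ by Lemma~\ref{qstar}(a). Summing gives exactly $1-2^{n-1}/n!$.

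For part (b), the double star $S_{m,n}$ has two inner vertices $v_1,v_2$ joined by the inner edge, with $\deg v_1=m+1$ and $\deg v_2=n+1$. There are four subsets $I'\subset\{v_1,v_2\}$. The empty subset again contributes $1$. For $I'=\{v_1\}$, the forest $G(I')$ consists only of $v_1$ together with its $m+1$ incident edges; crucially, the inner edge to $v_2$ is retained as an edge of $G(I')$, but $v_2$ is now a leaf, so $G(\{v_1\})\cong S_{m+1}$ with $|E(G(I'))|=m+1$, contributing $-r(S_{m+1})\cdot 2^{m}=-2^{m}/(m+1)!$. Symmetrically, $I'=\{v_2\}$ contributes $-2^{n}/(n+1)!$. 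Finally, for $I'=\{v_1,v_2\}$ we have $G(I')=S_{m,n}$ with $|E(G(I'))|=m+n+1$, giving $(+1)\cdot r(S_{m,n})\cdot 2^{m+n-1}$, and substituting the value of $r(S_{m,n})$ from Lemma~\ref{qstar}(b) yields the last two terms of the claimed formula.

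The only subtle point, and the step I would flag as the main place to slip up, is correctly identifying $G(\{v_i\})$ for the double star: one must remember that the definition of $G(I')$ keeps every edge incident to a vertex in $I'$ (including the inner edge), while the other endpoint of the inner edge becomes a leaf because it is no longer in $I'$. This is why $G(\{v_1\})=S_{m+1}$ rather than $S_m$, which in turn is why the formula features $(m+1)!$ and $(n+1)!$ in the two middle terms. Once this identification is made, the arithmetic is immediate from Theorem~\ref{V-formula} and Lemma~\ref{qstar}.
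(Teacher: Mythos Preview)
Your proof is correct and follows essentially the same approach as the paper's own proof: both apply Theorem~\ref{V-formula} directly, enumerate the subsets $I'\subset I(T)$, identify each $G(I')$, and plug in the values from Lemma~\ref{qstar}. Your added remark about why $G(\{v_i\})\cong S_{m+1}$ rather than $S_m$ is a nice clarification that the paper leaves implicit.
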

 \begin{proof}
We use the formula from Theorem~\ref{V-formula}. In the case of $S_n$ there is only one inner vertex, thus, the sum has only two terms:

$$V_n(P_{S_n})=r(\emptyset)\cdot 2^0-r(S_n)\cdot 2^{n-1}=1-\frac{2^{n-1}}{n!}.$$

In the case of $S_{m,n}$ the sum has 4 terms:
$$V_n(P_{S_{m,n}})=r(\emptyset)\cdot 2^0-r(S_{m+1})\cdot 2^{m}-r(S_{n+1})\cdot 2^{n}+r(S_{m,n})\cdot 2^{m+n-1}=$$
$$=1-\frac{2^m}{(m+1)!}-\frac{2^n}{(n+1)!} +2^{m+n-1}\cdot \left(\frac{1}{(m+n+1)m!n!}+\frac{1}{(m+n+1)!}\right).$$
For the value of $r$ on stars and double stars, we used Lemma~\ref{qstar}.
 \end{proof}
 \begin{Example}\label{example-V}
We illustrate the computation of $V_7(P_T)$, as a continuation of Examples \ref{example-P} and \ref{r(T)}. We refer now to Figure~\ref{272}, where we highlight the inner vertices.
\vspace{.1in}
\begin{figure}[h]
\centering
\begin{tikzpicture}
\foreach \x in {2,4,6}{
\filldraw [red] (\x,1) circle (3pt);}
\draw [black, thick] (2,1)--(1,2);
\draw [black, thick] (1,0)--(2,1);
%\draw[dot, black] (2,1);
\draw [black, thick] (2,1)--(4,1);
\node [right, black, ultra thick] at (1.9,1.3) {$v_1$};
\node [right, black, ultra thick] at (4.1,1.3) {$v_2$};
\node [right, black, ultra thick] at (5.5,1.3) {$v_3$};
\draw [black, thick] (4,1)--(4,2);
\draw [black, thick] (4,1)--(6,1);
\draw [black, thick] (6,1)--(7,0);
%\node [right, red, ultra thick] at (4.6,1.3) {$e_4$};
\draw [black, thick] (7,2)--(6,1);
\end{tikzpicture}
\caption{}
\label{272}
\end{figure}

The set $I(T)=\{v_1,v_2,v_3\}$ has 8 subsets. Thus:

$$V_7(P_T)=-r(T)\cdot 2^{7-3}+r(S_{2,2})\cdot 2^{5-2}+r(S_{3}\cup S_3)\cdot 2^{6-2}+r(S_{2,2})\cdot 2^{5-2}-$$
$$-r(S_{3})\cdot 2^{3-1}-r(S_{3})\cdot 2^{3-1}-r(S_{3})\cdot 2^{3-1}+1=$$
$$=-\frac{16\cdot 102}{7!}+2\cdot8\cdot\left(\frac{1}{2!\cdot2!\cdot5}+\frac{1}{5!}\right)+16\left(\frac{1}{3!}\right)^2-3\cdot4\cdot\frac{1}{3!}+1=\frac{272}{7!}.$$

We conclude that the lattice volume of $P_T$ is 272 and its lattice volume in the lattice $L_T$ is $272/8=34$. Thus, the phylogenetic degree of the algebraic variety $X_T$ is 34.

 \end{Example}
 
 We will mention one important, but trivial fact:
\begin{Lemma}\label{deg2}
Let $T$ be a tree that contains a vertex of degree two. Then, $V_{|E(T)|}(P_T)=0$. 
\end{Lemma}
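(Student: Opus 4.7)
The plan is to exhibit two facet-defining inequalities of $P_T$ that together force an equality, showing that $P_T$ lies in a proper affine hyperplane of $\mathbb R^{|E(T)|}$.

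Let $v$ be a vertex of degree two. Since leaves have degree one, $v$ must be an inner vertex, i.e. $v\in I(T)$, and $\mathcal N_T(v)=\{e_1,e_2\}$ for two distinct edges $e_1,e_2$. The odd-cardinality subsets of $\mathcal N_T(v)$ are exactly the singletons $\{e_1\}$ and $\{e_2\}$. I would then apply Theorem~\ref{facetdescription_Z2} to these two subsets to produce the inequalities that must hold on every point of $P_T$.

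Explicitly, taking $A=\{e_1\}$ gives $x_{e_2}-x_{e_1}\ge 1-1=0$, and taking $A=\{e_2\}$ gives $x_{e_1}-x_{e_2}\ge 0$. Combining these two valid inequalities, every $x\in P_T$ satisfies $x_{e_1}=x_{e_2}$. Hence $P_T\subset H$, where $H=\{x\in\mathbb R^{|E(T)|}:x_{e_1}=x_{e_2}\}$ is an affine hyperplane of codimension one. Consequently $P_T$ is not full-dimensional in $\mathbb R^{|E(T)|}$, so $V_{|E(T)|}(P_T)=0$.

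There is essentially no obstacle here; the only thing to check carefully is that a degree-two vertex of $T$ really is an inner vertex (so that the facet description supplies the two needed inequalities), which follows from the convention that leaves have valency one. The argument is the same in spirit as Lemma~\ref{0volume}: two opposing hyperplane cuts collapse onto a common hyperplane, killing full-dimensionality.
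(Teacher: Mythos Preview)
Your proof is correct and matches the paper's own argument almost verbatim: both observe that for a degree-two inner vertex $v$ with adjacent edges $e_1,e_2$, the two facet inequalities from Theorem~\ref{facetdescription_Z2} give $x_{e_1}-x_{e_2}\ge 0$ and $x_{e_2}-x_{e_1}\ge 0$, forcing $x_{e_1}=x_{e_2}$ and hence $P_T$ is not full-dimensional. You are simply a bit more explicit about why $v\in I(T)$ and which subsets $A$ are used.
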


\begin{proof}
Let $v$ be a vertex of degree two and $e_1$ and $e_2$ be edges adjacent to $v$. From the inequalities $0\le x_{e_1}-x_{e_2}\le 0$, it follows that $x_{e_1}=x_{e_2}$. This implies that $P_T$ is not full-dimensional and, therefore, its volume is zero.
\end{proof}

We define polytopes $P_T$ only for the trees $T$ because that is what is meaningful for the binary Jukes-Cantor model. However, we can similarly define polytope $P_F$ for any forest $F$ using the same facet description. It is easy to verify that we never used the fact that $T$ is a tree, so Theorem \ref{V-formula} holds also for a forest $F$. Moreover, if $F$ is a forest with connected components $T_1,\dots,T_k$, from the definition we have $P_F=\prod_{i=1}^k P_{T_i}$. Thus, it may seem that considering polytopes associated to forests $P_F$ is pointless, but it will allow us to formulate the result from the next section more easily.

%\vspace{.2in}
\section{Recursive formula for the volume}\label{anotherformula}
\vspace{.2in}

 For a forest $F$ and a vertex $v\in I(F)$ we will denote by $F\ominus v$ the forest obtained by removing the vertex $v$ but keeping the edges adjacent to $v$ as leaves.
 Thus, $E(F\ominus v)=E(F)$, but $I(F\ominus v)=I(F)\setminus\{v\}$.

\begin{Example}
Consider a tree $T$ with three inner vertices of degrees 3,4, and 3. Let $v$ be the middle inner vertex. Then the graph $T\ominus v$ is a forest with 4 components: two 3-stars and two 1-stars, as in \ref{exmaple-ominus}.
\begin{figure}[H]
\begin{minipage}[c]{0.5\linewidth}
\centering
\begin{tikzpicture}
\foreach \x in {4}{
\filldraw [red] (\x,1) circle (3pt);}
\draw [black, thick] (1,0)--(2,1)--(1,2);
\draw [black, thick] (2,1)--(4,1)--(3,2)--(4,1)--(5,2);
\draw [black, thick] (4,1)--(6,1)--(7,2)--(6,1)--(7,0);
\end{tikzpicture}
\caption*{$T$ with marked vertex $v$}
\end{minipage}\hfill
\begin{minipage}[c]{0.5\linewidth}
\centering
\begin{tikzpicture}
%\foreach \x in {4}{
%\filldraw [red] (\x,1) circle (3pt);}
\draw [black, thick] (1,0)--(2,1)--(1,2);
\draw [black, thick] (2,1)--(3.4,1);
\draw [black, thick] (3.8,1)--(3.8,2);
\draw [black, thick] (4.3,1)--(4.3,2);
\draw [black, thick] (4.6,1)--(6,1)--(7,2)--(6,1)--(7,0);
\end{tikzpicture}
\caption*{$T\ominus v$}
\end{minipage}
\caption{}
\label{exmaple-ominus}
\end{figure}
\end{Example}

 \begin{Theorem}\label{v-splitting}
Let $F$ be a forest with an inner edge $e_0$ connecting the vertices $v$ and $w$. Let $e_1$ be an edge adjacent to $v$ such that its other end is a leaf. Suppose that the degree of $v$ is at least three. Then the following equation holds (see also Figure \ref{picture-Vsplit}):

$$
V_{|E(F)|}(P_F)=-V_{|E(F)|}(P_{[F\setminus e_1,w,1]})+V_{|E(F)|}(P_{[F\setminus e_0,w,1]})-$$
$$-V_{|E(F)|}(P_{F//e_0})+V_{|E(F)|}(P_{F\ominus w})+V_{|E(F)|}(P_{[F\setminus e_1,w,1]\ominus v}).$$
 \end{Theorem}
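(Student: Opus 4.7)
The plan is to expand each of the six volumes on both sides using Theorem~\ref{V-formula} and match contributions indexed by subsets of inner vertices. Set $N = I(F) \setminus \{v, w\}$, and partition the index set of the expansion of $V_{|E(F)|}(P_F)$ into four classes according to whether $v$ and $w$ belong to $I'$; denote the four resulting partial sums by $\Sigma_{\bar v \bar w}$, $\Sigma_{v \bar w}$, $\Sigma_{\bar v w}$, and $\Sigma_{vw}$, where $\bar v$ means $v \notin I'$ and so on.

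Perform the analogous partitioning for each of the five forests on the right-hand side. The key observation is that if a modification of $F$ only affects the neighborhoods of $v$ and $w$, then $G(I'')$ in the modified forest coincides with $G_F(I'')$ whenever $I''$ avoids the altered vertices. This yields the identifications $V_{|E(F)|}(P_{F \ominus w}) = \Sigma_{\bar v \bar w} + \Sigma_{v \bar w}$ and $V_{|E(F)|}(P_{[F \setminus e_1, w, 1] \ominus v}) = \Sigma_{\bar v \bar w} + T_3$, where $T_3$ is a sum indexed by subsets of the form $\{w\} \cup J$. For the three remaining forests $[F \setminus e_1, w, 1]$, $[F \setminus e_0, w, 1]$, and $F//e_0$, the $I'' \subset N$ contribution is again a copy of $\Sigma_{\bar v \bar w}$, while the other classes produce modified $r$-values obtained by removing $e_0$ or $e_1$ or by appending an extra leaf at $w$; one uses here that $I([F\setminus e_i, w, 1]) = I(F)$, guaranteed by the hypothesis $\deg_F(v) \geq 3$.

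The $\Sigma_{vw}$-contribution is handled by applying Lemma~\ref{Q-splitting-formula} to $G_F(\{v, w\} \cup J)$: this is valid because $e_0$ is inner there and $e_1$ is a leaf edge at $v$. The three resulting summands match exactly the $I'' = \{v, w\} \cup J$ contributions to $-V(P_{[F \setminus e_1, w, 1]})$ and $V(P_{[F \setminus e_0, w, 1]})$, and the $I'' = \{v'\} \cup J$ contribution to $-V(P_{F // e_0})$; the explicit factor of $2$ in Lemma~\ref{Q-splitting-formula} precisely compensates for the drop of one inner vertex upon contracting $e_0$. For subsets containing exactly one of $v, w$, two symmetry identities do the remaining work: first, $r(G_F(\{v\} \cup J) \setminus e_0) = r(G_F(\{v\} \cup J) \setminus e_1)$, because both graphs are obtained from $G_F(\{v\} \cup J)$ by deleting one of two interchangeable non-inner edges at the inner vertex $v$; and second, $r([G_F(\{w\} \cup J) \setminus e_0, w, 1]) = r(G_F(\{w\} \cup J))$, because replacing one non-inner edge at $w$ by another yields an isomorphic structure from the point of view of the inner vertices. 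Together these force the $\{v\}$-only contributions from $-A$ and $B$ to cancel, the $\{w\}$-only contribution from $B$ to equal $\Sigma_{\bar v w}$, and the $\{w\}$-only parts of $-A$ and $E$ to be exact negatives.

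The main obstacle will be the careful bookkeeping of the signs $(-1)^{|I''|}$ and the exponents $2^{|E(G(I''))| - |I''|}$ across five forests whose inner vertex sets, edge sets, and neighborhoods of $v$ and $w$ differ in subtle ways. In particular, one must verify that $|E(G(I''))|$ is unchanged when a leaf edge at $w$ is swapped for another, so that the powers of $2$ appearing in the different expansions line up, and that the factor of $2$ in Lemma~\ref{Q-splitting-formula} correctly matches the reduction $|I''| \to |I''| - 1$ caused by contracting $e_0$ (so that the exponents $2^{|E(G_F(\{v,w\}\cup J))| - |J| - 2}$ and $2^{|E(G_{F//e_0}(\{v'\}\cup J))| - |J| - 1}$ agree up to the expected factor).
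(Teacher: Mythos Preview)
Your proposal is correct and follows essentially the same route as the paper's proof: both expand each of the six volumes via Theorem~\ref{V-formula}, partition the resulting sums according to which of $v,w$ lie in $I'$, invoke Lemma~\ref{Q-splitting-formula} for the $\{v,w\}\subset I'$ block (with the factor~$2$ absorbing the drop in $|I'|$ upon contraction), and use the graph isomorphisms $G_{[F\setminus e_1,w,1]}(I')\cong G_{[F\setminus e_0,w,1]}(I')$ for $v\in I',\,w\notin I'$ and $G_{[F\setminus e_1,w,1]}(I')\cong G_{[F\setminus e_1,w,1]\ominus v}(I')$, $G_{[F\setminus e_0,w,1]}(I')\cong G_F(I')$ for $w\in I',\,v\notin I'$ to handle the single-vertex blocks. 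The paper simply writes out the eighteen partial sums explicitly and matches them line by line, whereas you package the same identifications as two ``symmetry identities'' on $r$; the content is identical.
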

 \begin{figure}[H]
\begin{minipage}[c]{0.5\linewidth}
\centering
\begin{tikzpicture}[scale=0.85]
%\foreach \x in {2,6}{
%\filldraw [red] (\x,1) circle (3pt);}
\draw [black, thick] (1,0)--(2,1)--(1,2);
\draw [black, thick] (2,1)--(4,1);
\draw [black, thick] (2,1)--(0.5,1.5);
\draw [black, thick] (2,1)--(0.5,0.5);
\draw [black, thick] (0,1)--(2,1);
\draw [red, thick] (2,1)--(2,2);
\draw [black, thick] (4,1)--(5,2)--(4,1)--(6,1)--(4,1)--(5,0);
\draw [black, thick] (4,1)--(5.5,1.5);
\draw [black, thick] (4,1)--(5.5,0.5);
\node [right, black, ultra thick] at (0.5,1.7) {$T_2$};
\node [right, black, ultra thick] at (1,2) {$T_1$};
\node [right, black, ultra thick] at (0,1.2) {$T_3$};
\node [right, black, ultra thick] at (-0.1,0.4) {$...$};
\node [right, black, ultra thick] at (1,0) {$T_k$};

\node [right, black, ultra thick] at (5.5,0.5) {$....$};
\node [right, black, ultra thick] at (5.5,1.5) {$T_{k+2}$};
\node [right, black, ultra thick] at (5,2) {$T_{k+1}$};
\node [right, black, ultra thick] at (6,1) {$T_{k+3}$};
\node [right, black, ultra thick] at (5, 0) {$T_{l}$};
\node [right, blue, ultra thick] at (7,1) {\huge $=$};

\end{tikzpicture}
\end{minipage}\hfill
\begin{minipage}[c]{0.5\linewidth}
\centering
\begin{tikzpicture}[scale=0.85]

%\foreach \x in {2,6}{
%\filldraw [red] (\x,1) circle (3pt);}
\draw [black, thick] (1,0)--(2,1)--(1,2);
\draw [black, thick] (2,1)--(4,1);
\draw [black, thick] (2,1)--(0.5,1.5);
\draw [black, thick] (2,1)--(0.5,0.5);
\draw [black, thick] (0,1)--(2,1);
%\draw [red, thick] (2,1)--(2,2);
\draw [black, thick] (4,1)--(5,2)--(4,1)--(6,1)--(4,1)--(5,0);
\draw [black, thick] (4,1)--(5.5,1.5);
\draw [black, thick] (4,1)--(5.5,0.5);
\node [right, black, ultra thick] at (0.5,1.7) {$T_2$};
\node [right, black, ultra thick] at (1,2) {$T_1$};
\node [right, black, ultra thick] at (0,1.2) {$T_3$};
\node [right, black, ultra thick] at (-0.1,0.4) {$...$};
\node [right, black, ultra thick] at (1,0) {$T_k$};
\draw [red, thick] (4,1)--(4,2);
\node [right, black, ultra thick] at (5.5,0.5) {$....$};
\node [right, black, ultra thick] at (5.5,1.5) {$T_{k+2}$};
\node [right, black, ultra thick] at (5,2) {$T_{k+1}$};
\node [right, black, ultra thick] at (6,1) {$T_{k+3}$};
\node [right, black, ultra thick] at (5, 0) {$T_{l}$};
\node [right, blue, ultra thick] at (-1,1) {\huge $-$};
\end{tikzpicture}
\end{minipage}
\end{figure}
\begin{figure}[H]
\begin{minipage}[c]{0.5\linewidth}
\centering
\begin{tikzpicture}[scale=0.85]
%\foreach \x in {2,6}{
%\filldraw [red] (\x,1) circle (3pt);}
\draw [black, thick] (1,0)--(2,1)--(1,2);
%\draw [black, thick] (2,1)--(4,1);
\draw [black, thick] (2,1)--(0.5,1.5);
\draw [black, thick] (2,1)--(0.5,0.5);
\draw [black, thick] (0,1)--(2,1);
\draw [red, thick] (2,1)--(2,2);
\draw [black, thick] (4,1)--(5,2)--(4,1)--(6,1)--(4,1)--(5,0);
\draw [black, thick] (4,1)--(5.5,1.5);
\draw [black, thick] (4,1)--(5.5,0.5);
\node [right, black, ultra thick] at (0.5,1.7) {$T_2$};
\node [right, black, ultra thick] at (1,2) {$T_1$};
\node [right, black, ultra thick] at (0,1.2) {$T_3$};
\node [right, black, ultra thick] at (-0.1,0.4) {$...$};
\node [right, black, ultra thick] at (1,0) {$T_k$};
\draw [red, thick] (4,1)--(4,2);
\node [right, black, ultra thick] at (5.5,0.5) {$....$};
\node [right, black, ultra thick] at (5.5,1.5) {$T_{k+2}$};
\node [right, black, ultra thick] at (5,2) {$T_{k+1}$};
\node [right, black, ultra thick] at (6,1) {$T_{k+3}$};
\node [right, black, ultra thick] at (5, 0) {$T_{l}$};
%\node [right, black, ultra thick] at (7,1) {$$};
\node [right, blue, ultra thick] at (-1,1.1) {\huge $+$};
\end{tikzpicture}
\end{minipage}\hfill
\begin{minipage}[c]{0.5\linewidth}
\centering
\begin{tikzpicture}
%\foreach \x in {2,6}{
%\filldraw [red] (\x,1) circle (3pt);}
\draw [black, thick] (1,0)--(2,1)--(1,2);
%\draw [black, thick] (2,1)--(4,1);
\draw [black, thick] (2,1)--(0.5,1.5);
\draw [black, thick] (2,1)--(0.5,0.5);
\draw [black, thick] (0,1)--(2,1);
%\draw [red, thick] (2,1)--(2,2);
\draw [black, thick] (2,1)--(3,2)--(2,1)--(4,1)--(2,1)--(3,0);
\draw [black, thick] (2,1)--(3.5,1.5);
\draw [black, thick] (2,1)--(3.5,0.5);
\node [right, black, ultra thick] at (0.5,1.7) {$T_2$};
\node [right, black, ultra thick] at (1,2) {$T_1$};
\node [right, black, ultra thick] at (0,1.2) {$T_3$};
\node [right, black, ultra thick] at (-0.1,0.4) {$...$};
\node [right, black, ultra thick] at (1,0) {$T_k$};
\draw [red, thick] (2,1)--(2.4,2);
\draw [red, thick] (2,1)--(1.6,2);
\node [right, black, ultra thick] at (3.5,0.5) {$....$};
\node [right, black, ultra thick] at (3.5,1.5) {$T_{k+2}$};
\node [right, black, ultra thick] at (3,2) {$T_{k+1}$};
\node [right, black, ultra thick] at (4,1) {$T_{k+3}$};
\node [right, black, ultra thick] at (3, 0) {$T_{l}$};
%\node [right, black, ultra thick] at (7,1) {$$};
\node [right, blue, ultra thick] at (-1,1.1) {\huge $-$};
\end{tikzpicture}
\end{minipage}

\end{figure}

\begin{figure}[H]
\begin{minipage}[c]{0.5\linewidth}
\centering
\begin{tikzpicture}[scale=0.85]
%\foreach \x in {2,6}{
%\filldraw [red] (\x,1) circle (3pt);}
\draw [black, thick] (1,0)--(2,1)--(1,2);
\draw [black, thick] (2,1)--(3,1);
\draw [black, thick] (2,1)--(0.5,1.5);
\draw [black, thick] (2,1)--(0.5,0.5);
\draw [black, thick] (0,1)--(2,1);
\draw [red, thick] (2,1)--(2,2);
\node [right, black, ultra thick] at (0.5,1.7) {$T_2$};
\node [right, black, ultra thick] at (1,2) {$T_1$};
\node [right, black, ultra thick] at (0,1.2) {$T_3$};
\node [right, black, ultra thick] at (-0.1,0.4) {$...$};
\node [right, black, ultra thick] at (1,0) {$T_k$};
\draw [black, thick] (4,0.5)--(5,0.5);
\draw [black, thick] (4,1)--(5,1);
\draw [black, thick] (4,1.5)--(5,1.5);
\draw [black, thick] (4,2)--(5,2);
\node [right, black, ultra thick] at (5,2) {$T_{k+1}$};
\node [right, black, ultra thick] at (5,1.5) {$T_{k+2}$};
\node [right, black, ultra thick] at (5,1) {$...$};
\node [right, black, ultra thick] at (5,0.5) {$T_{l}$};
\node [right, blue, ultra thick] at (-1,1.2) {\huge $+$};
\end{tikzpicture}
\end{minipage}\hfill
\begin{minipage}[c]{0.5\linewidth}
\centering
\begin{tikzpicture}[scale=0.85]
%\draw [black, thick] (1,0)--(2,0);
\draw [black, thick] (1,0.5)--(2,0.5);
\draw [black, thick] (1,1)--(2,1);
\draw [black, thick] (1,1.5)--(2,1.5);
\draw [black, thick] (1,2)--(2,2);
\node [right, black, ultra thick] at (0.3,2) {$T_{1}$};
\node [right, black, ultra thick] at (0.3,1.5) {$T_{2}$};
\node [right, black, ultra thick] at (0.3,1) {$...$};
\node [right, black, ultra thick] at (0.3,0.5) {$T_{k}$};
\draw [black, thick] (3,1)--(4,1);
\draw [black, thick] (4,1)--(5,2)--(4,1)--(6,1)--(4,1)--(5,0);
\draw [black, thick] (4,1)--(5.5,1.5);
\draw [black, thick] (4,1)--(5.5,0.5);
\draw [red, thick] (4,1)--(4,2);
\node [right, black, ultra thick] at (5.5,0.5) {$....$};
\node [right, black, ultra thick] at (5.5,1.5) {$T_{k+2}$};
\node [right, black, ultra thick] at (5,2) {$T_{k+1}$};
\node [right, black, ultra thick] at (6,1) {$T_{k+3}$};
\node [right, black, ultra thick] at (5, 0) {$T_{l}$};
\node [right, blue, ultra thick] at (-1,1.2) {$-$};
\node [right, blue, ultra thick] at (-1,1.2) {\huge $+$};
\end{tikzpicture}
\end{minipage}
\caption{Recursive formula for $V(P_F)$}
\label{picture-Vsplit}
\end{figure}
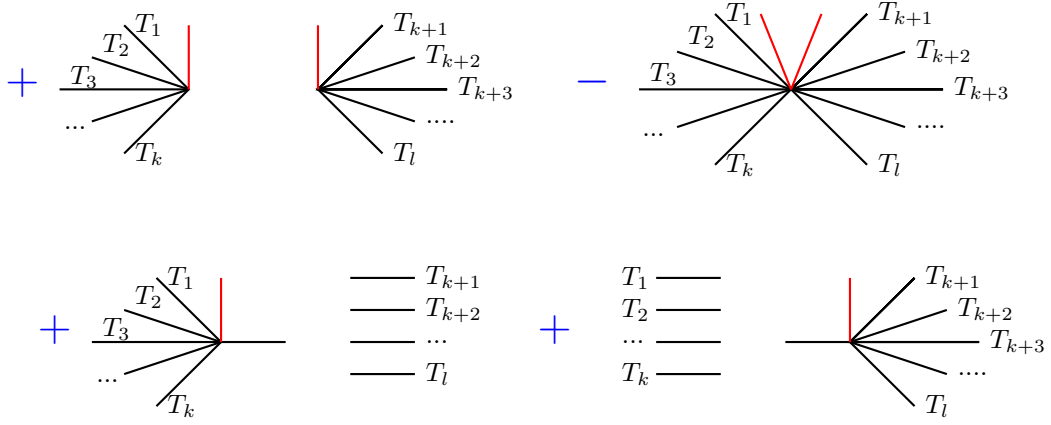

 \begin{proof}
We will use the formula from Theorem \ref{V-formula}. We will split the terms in the sum depending on which of the vertices $v,w$ are contained in the set $I'$. We will be using graphs generated by a set of inner vertices $I'$ in different forests. Thus, we will extend our notation to $G_F(I')$ which will denote the graph generated by inner vertices $I'$ in the forest $F$. We have the following:

$$V_{|E(F)|}(P_F)=\sum_{I'\subset I(F)\setminus\{v,w\}}(-1)^{|I'|}\cdot r(G_F(I'))\cdot 2^{|E(G_F(I'))|-|I'|}+$$
$$+\sum_{I'\subset I(F)\setminus\{v\}, w\in I'}(-1)^{|I'|}\cdot r(G_F(I'))\cdot 2^{|E(G_F(I'))|-|I'|}+$$
$$+\sum_{I'\subset I(F)\setminus\{w\}, v\in I'}(-1)^{|I'|}\cdot r(G_F(I'))\cdot 2^{|E(G_F(I'))|-|I'|}+$$
$$+\sum_{I'\subset I(F),v,w\in I'}(-1)^{|I'|}\cdot r(G_F(I'))\cdot 2^{|E(G_F(I'))|-|I'|},$$

$$V_{|E(F)|}(P_{[F\setminus e_1,w,1]})=\sum_{I'\subset I(F)\setminus\{v,w\}}(-1)^{|I'|}\cdot r(G_{[F\setminus e_1,w,1]}(I'))\cdot 2^{|E(G_{[F\setminus e_1,w,1]}(I'))|-|I'|}+$$
$$+\sum_{I'\subset I(F)\setminus\{v\}, w\in I'}(-1)^{|I'|}\cdot r(G_{[F\setminus e_1,w,1]}(I'))\cdot 2^{|E(G_{[F\setminus e_1,w,1]}(I'))|-|I'|}+$$
$$+\sum_{I'\subset I(F)\setminus\{w\}, v\in I'}(-1)^{|I'|}\cdot r(G_{[F\setminus e_1,w,1]}(I'))\cdot 2^{|E(G_{[F\setminus e_1,w,1]}(I'))|-|I'|}+$$
$$+\sum_{I'\subset I(F),v,w\in I'}(-1)^{|I'|}\cdot r(G_{[F\setminus e_1,w,1]}(I'))\cdot 2^{|E(G_{[F\setminus e_1,w,1]}(I'))|-|I'|},$$

$$V_{|E(F)|}(P_{[F\setminus e_0,w,1]})=\sum_{I'\subset I(F)\setminus\{v,w\}}(-1)^{|I'|}\cdot r(G_{[F\setminus e_0,w,1]}(I'))\cdot 2^{|E(G_{[F\setminus e_0,w,1]}(I'))|-|I'|}+$$
$$+\sum_{I'\subset I(F)\setminus\{v\}, w\in I'}(-1)^{|I'|}\cdot r(G{[F\setminus e_0,w,1]}F(I'))\cdot 2^{|E(G_{[F\setminus e_0,w,1]}(I'))|-|I'|}+$$
$$+\sum_{I'\subset I(F)\setminus\{w\}, v\in I'}(-1)^{|I'|}\cdot r(G_{[F\setminus e_0,w,1]}(I'))\cdot 2^{|E(G_{[F\setminus e_0,w,1]}(I'))|-|I'|}+$$
$$+\sum_{I'\subset I(F),v,w\in I'}(-1)^{|I'|}\cdot r(G_{[F\setminus e_0,w,1]}(I'))\cdot 2^{|E(G_{[F\setminus e_0,w,1]}(I'))|-|I'|}.$$

In the following formula, we will denote by $vw$ the contracted vertex in $F//e_0$:
$$V_{|E(F)|}(P_{F//e_0})=\sum_{I'\subset I({F//e_0})\setminus\{vw\}}(-1)^{|I'|}\cdot r(G_{F//e_0}(I'))\cdot 2^{|E(G_{F//e_0}(I'))|-|I'|}+$$
$$+\sum_{I'\subset I({F//e_0}), vw\in I'}(-1)^{|I'|}\cdot r(G_{F//e_0}(I'))\cdot 2^{|E(G_{F//e_0}(I'))|-|I'|},$$

$$V_{|E(F)|}(P_{F\ominus w})=\sum_{I'\subset I(F)\setminus\{v\}}(-1)^{|I'|}\cdot r(G_{F\ominus w}(I'))\cdot 2^{|E(G_{F\ominus w}(I'))|-|I'|}+$$
$$+\sum_{I'\subset I(F), v\in I'}(-1)^{|I'|}\cdot r(G_{F\ominus w}(I'))\cdot 2^{|E(G_{F\ominus w}(I'))|-|I'|},$$

$$V_{|E(F)|}(P_{[F\setminus e_1,w,1]\ominus v})=\sum_{I'\subset I(F)\setminus\{w\}}(-1)^{|I'|}\cdot r(G_{[F\setminus e_1,w,1]\ominus v}(I'))\cdot 2^{|E(G_{[F\setminus e_1,w,1]\ominus v}(I'))|-|I'|}+$$
$$+\sum_{I'\subset I(F), w\in I'}(-1)^{|I'|}\cdot r(G_{[F\setminus e_1,w,1]\ominus v}(I'))\cdot 2^{|E(G_{[F\setminus e_1,w,1]\ominus v}(I'))|-|I'|}.$$

Now we will compare these 18 terms in a few equalities:

\textbf{Terms for $I'$ that contain both $v$ and $w$}:
Consider any $I'$ such that $v,w\in I'$.
Note that $|E(G_{[F\setminus e_1,w,1]}(I'))|=|E(G_{[F\setminus e_0,w,1]}(I'))|=|E(G_{F//e_0}(I'\setminus \{v,w\}\cup\{vw\}))|$. Moreover,
$$G_{[F\setminus e_1,w,1]}(I')=[G_F(I')\setminus e_1,w,1],$$ $$G_{[F\setminus e_0,w,1]}(I')=[G_F(I')\setminus e_0,w,1],$$
$$G_{F//e_0}(I'\setminus \{v,w\}\cup\{vw\})=G_F(I')//e_0.$$

By Lemma~\ref{Q-splitting-formula}, we have

$$r(G_F(I'))=-r([G_F(I')\setminus e_1,w,1])+r([G_F(I')\setminus e_0,w,1])+2r(G_F(I')//e_0).$$

By using three equalities above and multiplying by $(-1)^{|I'|}\cdot 2^{|E(G_F(I'))|-|I'|}$, we obtain:

$$(-1)^{|I'|}\cdot r(G_F(I'))\cdot 2^{|E(G_F(I'))|-|I'|}=-(-1)^{|I'|}\cdot r(G_{[F\setminus e_1,w,1]}(I'))\cdot 2^{|E(G_{[F\setminus e_1,w,1]}(I'))|-|I'|}
$$
$$+(-1)^{|I'|}\cdot r(G_{[F\setminus e_0,w,1]}(I'))\cdot 2^{|E(G_{[F\setminus e_0,w,1]}(I'))|-|I'|}-(-1)^{|I'|}\cdot r(G_{F//e_0}(I'))\cdot 2^{|E(G_{F//e_0}(I'))|-|I'|}.$$

\textbf{Terms for $I'$ that contain $v$ but not $w$}: Consider any such set $I'$. Note that $G_F(I')=G_{F\ominus w}(I')$ and $G_{[F\setminus e_1,w,1]}(I')=G_{[F\setminus e_0,w,1]}(I')$, thus

$$(-1)^{|I'|}\cdot r(G_F(I'))\cdot 2^{|E(G_F(I'))|-|I'|}=(-1)^{|I'|}\cdot r(G_{F\ominus w}(I'))\cdot 2^{|E(G_{F\ominus w}(I'))|-|I'|},
$$

$$0=-(-1)^{|I'|}\cdot r(G_{[F\setminus e_1,w,1]}(I'))\cdot 2^{|E(G_{[F\setminus e_1,w,1]}(I'))|-|I'|}+$$
$$+(-1)^{|I'|}\cdot r(G_{[F\setminus e_0,w,1]}(I'))\cdot 2^{|E(G_{[F\setminus e_0,w,1]}(I'))|-|I'|}.$$

\textbf{Terms for $I'$ that contain $w$ but not $v$}:  Consider any such set $I'$. Note that $G_F(I')=G_{[F\setminus e_0,w,1]}(I')$ and $G_{[F\setminus e_1,w,1]}(I')=G_{[F\setminus e_1,w,1]\ominus v}(I')$, thus

$$(-1)^{|I'|}\cdot r(G_F(I'))\cdot 2^{|E(G_F(I'))|-|I'|}=(-1)^{|I'|}\cdot r(G_{[F\setminus e_0,w,1]}(I'))\cdot 2^{|E(G_{[F\setminus e_0,w,1]}(I'))|-|I'|},
$$

$$0=-(-1)^{|I'|}\cdot r(G_{[F\setminus e_1,w,1]}(I'))\cdot 2^{|E(G_{[F\setminus e_1,w,1]}(I'))|-|I'|}+$$
$$+(-1)^{|I'|}\cdot r(G_{[F\setminus e_1,w,1]\ominus v}(I'))\cdot 2^{|E(G_{[F\setminus e_1,w,1]\ominus v}(I'))|-|I'|}.$$

\textbf{Terms for $I'$ that do not contain $v$ and $w$}: Consider any such set $I'$. Note that 
$$G_F(I')=G_{[F\setminus e_0,w,1]}(I')=G_{[F\setminus e_1,w,1]}(I')=G_{F//e_0}(I')=G_{F\ominus w}(I')=G_{[F\setminus e_1,w,1]\ominus v}(I').$$

Therefore, we obtain
$$(-1)^{|I'|}\cdot r(G_F(I'))\cdot 2^{|E(G_F(I'))|-|I'|}=-(-1)^{|I'|}\cdot r(G_{[F\setminus e_1,w,1]}(I'))\cdot 2^{|E(G_{[F\setminus e_1,w,1]}(I'))|-|I'|}
$$
$$+(-1)^{|I'|}\cdot r(G_{[F\setminus e_0,w,1]}(I'))\cdot 2^{|E(G_{[F\setminus e_0,w,1]}(I'))|-|I'|}-(-1)^{|I'|}\cdot r(G_{F//e_0}(I'))\cdot 2^{|E(G_{F//e_0}(I'))|-|I'|}+$$
$$+(-1)^{|I'|}\cdot r(G_{F\ominus w}(I'))\cdot 2^{|E(G_{F\ominus w}(I'))|-|I'|}+(-1)^{|I'|}\cdot r(G_{[F\setminus e_1,w,1]\ominus v}(I'))\cdot 2^{|E(G_{[F\setminus e_1,w,1]\ominus v}(I'))|-|I'|}.$$

By summing up the above equalities for all $I'$, we get the formula from the theorem statement.

 \end{proof}

\begin{Remark}   
 The formula given in Theorem~\ref{v-splitting} allows us to recursively compute the volumes of polytopes $P_T$ without the need to compute the numbers $r(F)$ or volumes of polytopes $Q_F$. Note that three of five graphs on the right-hand side of the formula are not connected, thus computing the volume is immediately reduced to computing the volume of smaller graphs. The graph $F//e_0$ has one less inner vertex than $F$, so is also simpler. The only problem is the graph $[F\setminus e_1,w,1]$. However, if we take any inner vertex $v$ with a single adjacent inner edge, we can use the formula to move all of its leaves to the next vertex and eventually get to the situation where the degree of $v$ is two.
 \end{Remark}

Using Theorem~\ref{v-splitting} for specific graphs one can obtain a simpler relation between volumes of polytopes $P_T$. However, there seems to be no easy way to obtain such relations, which would make the volume computations much faster.
We provide a few such relations in the following results:

\begin{Corollary}\label{easy-Vformula}
Let $F$ be a forest with inner vertices $v,w$ such that $v$ is of degree 3 and $w$ is of degree 2. Let $e_0$ be the edge connecting $v$ and $w$. Moreover, assume that one of the other edges adjacent to $v$ is a leaf. Denote this edge by $e_1$.

Then $$V_{|E(F)|}(P_{F//e_0})=V_{|E(F)|}(P_{F\ominus w})+V_{|E(F)|}(P_{[F\setminus e_1,w,1]\ominus v}).$$
\end{Corollary}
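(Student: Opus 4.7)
The plan is to apply Theorem~\ref{v-splitting} directly to the forest $F$ with the given edges $e_0$ and $e_1$. The hypothesis that $v$ has degree at least three is satisfied, since $v$ has degree exactly $3$. The theorem then yields a six-term identity relating the volumes of $P_F$, $P_{[F\setminus e_1,w,1]}$, $P_{[F\setminus e_0,w,1]}$, $P_{F//e_0}$, $P_{F\ominus w}$, and $P_{[F\setminus e_1,w,1]\ominus v}$.

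The key observation is that three of these six terms vanish by Lemma~\ref{deg2} (tacitly extended to forests via $P_F=\prod_i P_{T_i}$), in each case because the underlying forest contains a vertex of degree two. First, $V_{|E(F)|}(P_F)=0$ because $w$ already has degree $2$ in $F$ by hypothesis. Second, in $[F\setminus e_1,w,1]$ the vertex $v$ has degree $2$: removing the leaf edge $e_1$ drops $v$'s degree from $3$ to $2$, and appending a leaf at $w$ does not affect $v$. Third, in $[F\setminus e_0,w,1]$ the vertex $w$ has degree $2$: deleting $e_0$ brings $w$ from degree $2$ down to degree $1$, and attaching a new leaf at $w$ restores it back to $2$.

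Substituting these three zeros into the identity from Theorem~\ref{v-splitting} and rearranging immediately yields the claimed equation
\[
V_{|E(F)|}(P_{F//e_0}) = V_{|E(F)|}(P_{F\ominus w}) + V_{|E(F)|}(P_{[F\setminus e_1, w, 1]\ominus v}).
\]
I do not anticipate any genuine obstacle in this argument. The only thing that needs to be checked carefully is the bookkeeping of the degrees of $v$ and $w$ in each of the five auxiliary forests on the right-hand side of Theorem~\ref{v-splitting}, and this is what forces exactly the three desired terms to vanish.
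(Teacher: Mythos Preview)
Your proposal is correct and follows exactly the paper's own proof: apply Theorem~\ref{v-splitting} to $F$ with $e_0,e_1$, and then observe that the three terms corresponding to $F$, $[F\setminus e_1,w,1]$, and $[F\setminus e_0,w,1]$ vanish by Lemma~\ref{deg2} because each contains a vertex of degree two. Your careful check of which vertex has degree two in each of the three forests is a bit more explicit than the paper's one-line justification, but the argument is identical.
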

\begin{figure}[H]
\begin{minipage}[c]{0.35\linewidth}
\centering
\begin{tikzpicture}[scale=0.75]
\draw [black, thick] (1,0)--(2,1)--(1,2)--(2,1)--(3,2);
\draw [black, thick] (2,1)--(3,0);
\node [right, black, ultra thick] at (0.3, 0) {$T_1$};
\node [right, black, ultra thick] at (3, 0) {$T_2$};
\node [right, blue, ultra thick] at (4.3,0.8) {\huge $=$};
\end{tikzpicture}
\end{minipage}\hfill
\begin{minipage}[c]{0.3\linewidth}
 \centering
\begin{tikzpicture}[scale=0.75]
\draw [black, thick] (0,1)--(2,1)--(3,2);
\draw [black, thick] (2,1)--(3,0);
\draw [black, thick] (3.5,1)--(4.5,1);
\node [right, black, ultra thick] at (-0.8, 1) {$T_1$};
\node [right, black, ultra thick] at (4.6, 1) {$T_2$};
%\node [right, blue, ultra thick] at (-1,1) {\huge $-$};
\end{tikzpicture}  
\end{minipage}\hfill
\begin{minipage}[c]{0.3\linewidth}
 \centering
\begin{tikzpicture}[scale=0.75]
\draw [black, thick] (0,0.6)--(1,0.6);
\draw [black, thick] (1,2)--(2,1)--(3,2);
\draw [black, thick] (2,1)--(2,0);
\node [right, black, ultra thick] at (-0.8, 0.7) {$T_1$};
\node [right, black, ultra thick] at (1.7, -0.3) {$T_2$};
\node [right, blue, ultra thick] at (-1.8,0.6) {\huge $+$};
\end{tikzpicture}  
\end{minipage}
\end{figure}
\begin{proof}
It is sufficient to observe that the graphs $F$, $[F\setminus e_1,w,1]$ and $[F\setminus e_0,w,1]$ all contain a vertex of degree two and, therefore, the volume of the corresponding polytopes is 0. Then the statement is a direct consequence of Theorem \ref{v-splitting}.
\end{proof}

\begin{Corollary}\label{easy_formula_2}
Let $T$ be a tree with a leaf $w$. Then $$\frac13 V_{|E(T)|}(P_T)+V_{|E(T)|+2}(P_{[T,w,2]})=V_{|E(T)|+3}(P_{[T,w,3]}).$$
\end{Corollary}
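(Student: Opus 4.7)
My plan is to apply Theorem~\ref{v-splitting} to $F := [T,w,3]$, identify the five forests appearing on the right-hand side, and then use two further applications of Theorem~\ref{v-splitting} (to $[T,w,2]$ and to $[T_+,w,2]$, where $T_+ := [T,w',1]$) to eliminate the auxiliary volumes. Throughout, let $w'$ denote the neighbor of $w$ in $T$ and $e_0 := (w,w')$. I first handle the trivial case $T = S_1$, where $w'$ is also a leaf: here $[T,w,k] = S_{k+1}$, and Corollary~\ref{Vstar}(a) gives $\tfrac13 \cdot 1 + \tfrac13 = \tfrac23$ directly. From now on I assume $\deg_T(w')\ge 2$, so that $w'$ is an inner vertex of every forest considered.

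The first application of Theorem~\ref{v-splitting} uses the vertex $w$ of $F$ (now inner of degree $4$) in the role of the theorem's $v$, the neighbor $w'$ in the role of the theorem's $w$, the inner edge $e_0$, and $e_1$ one of the three new leaves. Careful inspection of the operations $\setminus e_i$, $//e_0$, $\ominus w'$ and $\ominus w$, together with Lemma~\ref{P-product-trees}, identifies the five resulting forests. Let $R$ denote the subforest of $T\ominus w'$ obtained by deleting the $S_1$-component containing $w$. Then $[F\setminus e_1,w',1] \cong [T_+,w,2]$, $[F\setminus e_0,w',1] \cong S_3 \sqcup T$, $F//e_0 \cong [T,w',3]$, $F \ominus w' \cong S_4 \sqcup R$, and $[F\setminus e_1,w',1] \ominus w \cong S_1 \sqcup S_1 \sqcup T_+$. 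Using $V(P_{S_3}) = \tfrac13$ and $V(P_{S_4}) = \tfrac23$ from Corollary~\ref{Vstar}(a), Theorem~\ref{v-splitting} then yields the master equation
\[
V(P_{[T,w,3]}) = -V(P_{[T_+,w,2]}) + \tfrac13 V(P_T) - V(P_{[T,w',3]}) + \tfrac23 V(P_R) + V(P_{T_+}).
\]

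I would then apply Theorem~\ref{v-splitting} in exactly the same manner (with $v = w$ of degree $3$) to both $[T,w,2]$ and $[T_+,w,2]$. In each case, the first two of the five right-hand terms contain either a vertex of degree $2$ at $w$ or an $S_2$-component, so by Lemma~\ref{deg2} their volumes vanish. Identifying the remaining three terms analogously yields the two auxiliary equations
\begin{align*}
V(P_{[T,w,2]}) &= -V(P_{[T,w',2]}) + \tfrac13 V(P_R) + V(P_{T_+}),\\
V(P_{[T_+,w,2]}) &= -V(P_{[T,w',3]}) + \tfrac13 V(P_R) + V(P_{[T,w',2]}).
\end{align*}

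Substituting the second auxiliary equation into the master equation, and using the first to rewrite $V(P_{T_+})$ in terms of $V(P_{[T,w,2]})$, $V(P_{[T,w',2]})$ and $V(P_R)$, the quantities $V(P_{[T,w',2]})$, $V(P_{[T,w',3]})$ and $V(P_R)$ all cancel, leaving exactly $V(P_{[T,w,3]}) = \tfrac13 V(P_T) + V(P_{[T,w,2]})$. The main obstacle I anticipate is the bookkeeping in the first step: correctly recognizing each of the five modified forests as a disjoint union of familiar pieces (stars $S_1, S_3, S_4$ and copies of $T$ or $T_+$). Once this is accomplished, the star volumes from Corollary~\ref{Vstar}(a) deliver the coefficient $\tfrac13$ on $V(P_T)$ automatically, and the three volumes associated with $w'$ are forced to cancel by the resulting system of linear equations.
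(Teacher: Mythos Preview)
Your proof is correct, but it takes a considerably longer route than the paper's argument. The paper introduces an auxiliary tree $T':=[[T,w,1],v,2]$ (add one edge at $w$ creating a new leaf $v$, then attach two further leaves at $v$); in $T'$ the vertex $w$ has degree~$2$ and $v$ has degree~$3$, so Corollary~\ref{easy-Vformula} applies directly. One then reads off $T'//e_0\cong[T,w,3]$, $T'\ominus w\cong T\sqcup S_3$, and $[T'\setminus e_1,w,1]\ominus v\cong[T,w,2]\sqcup S_1$, and the identity drops out in a single step with no cancellations needed. Your approach, by contrast, applies Theorem~\ref{v-splitting} three separate times (to $[T,w,3]$, $[T,w,2]$, and $[T_+,w,2]$), keeps track of the auxiliary quantities $V(P_{T_+})$, $V(P_R)$, $V(P_{[T,w',2]})$, $V(P_{[T,w',3]})$, and then eliminates them by linear algebra. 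The computations you outline are accurate (in particular your identifications of the five forests in each application check out), but the paper's trick of manufacturing a degree-$2$ vertex so that Corollary~\ref{easy-Vformula} kills three of the five terms at once is worth noting: it avoids ever introducing $w'$-based volumes and the need for a separate base case $T=S_1$.
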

\begin{figure}[H]
\begin{minipage}[c]{0.35\linewidth}
\centering
\begin{tikzpicture}[scale=0.8]
\draw [black, thick] (1,0)--(2,1)--(1,2);
\draw [black, thick] (2,1)--(4,1);
%\draw [black, thick] (2,1)--(0.5,1.5);
%\draw [black, thick] (2,1)--(0.5,0.5);
\draw [black, thick] (0,1)--(2,1);
\node [right, black, ultra thick] at (4, 1) {$T$};
\node [right, blue, ultra thick] at (5.1,0.8) {\huge $=$};
\end{tikzpicture}
\end{minipage}\hfill
\begin{minipage}[c]{0.3\linewidth}
 \centering
 \begin{tikzpicture}[scale=0.8]
\draw [black, thick] (1,0)--(2,1)--(1,2);
\draw [black, thick] (2,1)--(4,1);
%\draw [black, thick] (2,1)--(0.5,1.5);
%\draw [black, thick] (2,1)--(0.5,0.5);
\draw [black, thick] (1,0)--(2,1);
\node [right, black, ultra thick] at (4, 1) {$T$};
%\node [right, blue, ultra thick] at (-1,1) {\huge $-$};
\end{tikzpicture}  
\end{minipage}\hfill
\begin{minipage}[c]{0.3\linewidth}
 \centering
 \begin{tikzpicture}[scale=0.8]
%\draw [black, thick] (1,0)--(2,1)--(1,2);
\draw [black, thick] (2,1)--(4,1);
%\draw [black, thick] (2,1)--(0.5,1.5);
%\draw [black, thick] (2,1)--(0.5,0.5);
%\draw [black, thick] (1,0)--(2,1);
\node [right, black, ultra thick] at (4, 1) {$T$};
\node [right, blue, ultra thick] at (0,1) {\huge $+\frac{1}{3}\cdot $};
\end{tikzpicture}  
\end{minipage}
\end{figure}

\begin{proof}
Let $v$ be the new leaf of a tree $[T,w,1]$. Consider the graph $T':=[[T,w,1],v,2]$. Let us denote by $e_0$ the edge connecting $v$ and $w$, and by $e_1$ one of the other two edges adjacent to $v$. We will apply Theorem \ref{easy-Vformula} for the tree $T'$ and edges $e_0, e_1$. 

We have $T'//e_0\cong [T,w,3]$. Furthermore, the forest $T'\ominus w$ consists of two components, the tree $T$ and a 3-star. The forest $[T'\setminus e_1,w,1]\ominus v$ consists of two components, one is isomorphic to $[T,w,2]$ and the other is 1-star. Thus,

 $$V_{|E(T)|+3}(P_{T'//e_0})= V_{|E(T)|+3}(P_{[T,w,3]}),$$
 $$V_{|E(T)|+3}(P_{T'\ominus w})= V_{|E(T)|}(P_T)\cdot V_3(S_3)=\frac 13 V_{|E(T)|}(P_T),$$
  $$V_{|E(T)|+3}(P_{[T'\setminus e_1,w,1]\ominus v})= V_{|E(T)|+2}(P_{[T,w,2]})\cdot V_1(S_1)=V_{|E(T)|+2}(P_{[T,w,2]}).$$

By Corollary \ref{easy-Vformula}, we have:

$$V_{|E(T)|+3}(P_{T'//e_0})=V_{|E(T)|+3}(P_{T'\ominus w})V_{|E(T)|+3}(P_{[T'\setminus e_1,w,1]\ominus v})\Leftrightarrow$$
$$\Leftrightarrow V_{|E(T)|+3}(P_{[T,w,3]})=V_{|E(T)|+2}(P_{[T,w,2]})+\frac 13 V_{|E(T)|}(P_T),$$

which proves the statement.
\end{proof}

\subsection{Examples.} We will end this section with two tables of phylogenetic degrees for all trees with at most ten edges. The formulas for star and double-star trees was already given in Corollary \ref{Vstar}.

We generalize the notion of a double star to the path-tree $S_{a_1,\dots,a_k}$. It is a tree with $k$ inner vertices $v_1,\dots, v_k$ which form a path. Moreover, each vertex $v_i$ has exactly $a_k$ leaves. It is easy to check that the cases $k=1$ and $k=2$ correspond to star and double-star. For instance, $S_{2,1,2}$ is the graph from Example~\ref{example-P}.

We list only trees where each inner vertex has a degree at least three. All of the path-trees are listed in Table~\ref{table:1}.

\begin{table}[h!]
\centering
\begin{tabular}{||c c c c||} 
 \hline
 Tree $T$ & Volume of $P_T$ & Lattice volume in $\ZZ^ {|E(T)|}$ & Phylogenetic degree of $X_T$ \\ [0.5ex] 
 \hline\hline
  $S_{2,1,2}$ & 17/315 & 272 & 34 \\ 
  $S_{3,1,2}$ & 31/315 & 3968 & 496 \\
  $S_{2,2,2}$ & 4/45 & 3584 & 448 \\
  $S_{4,1,2}$ & 49/405 & 43904 & 5488\\  
  $S_{3,2,2}$ & 23/135 & 61824 & 7728 \\
  $S_{3,1,3}$ & 34/189 & 65280 & 8160\\ 
  $S_{2,3,2}$ & 59/567 & 37760 & 4720 \\ 
  $S_{2,1, 1,2}$ & 62/2835 & 7936 & 496\\ 
  $S_{5,1,2}$ & 1838/14175 & 470528 & 58816 \\
  $S_{4,2,2}$ & 1021/4725 & 784128 & 98016\\
  $S_{4,1,3}$ & 628/2835 & 803840 & 100480 \\
  $S_{3,2,3}$ & 44/135 & 1182720 & 147840 \\ 
   $S_{3,3,2}$ & 116/567 & 742400 & 92800 \\
  $S_{2,4,2}$ & 5129/50400 & 396288 & 49536\\
  $S_{3,1, 1,2}$ & 113/2835 & 144640 & 9040\\
   $S_{2,2, 1,2}$ & 169/4725 & 129792 & 8112 \\ 
 \hline
\end{tabular}
\caption{Path-trees with at most 10 edges.}
\label{table:1}
\end{table}

There are also three trees with at most 10 edges which are not path-trees and they are listed in Table~\ref{table:2}.

\begin{table}[H]
\centering
\begin{tabular}{||c c c c||} 
 \hline
 Tree $T$ & Volume of $P_T$ & Lattice volume in $\ZZ^ {|E(T)|}$ & Phylogenetic degree of $X_T$\\ [0.3ex] 
 \hline\hline
  \begin{tikzpicture}[scale=0.6]
\draw [black, thick] (1,0)--(2,1)--(1,2);
\draw [black, thick] (2,1)--(3,1)--(4,2)--(3,3)--(4,2)--(5,3);
\draw [black, thick] (3,1)--(4,0)--(3,-1)--(4,0)--(5,-1);
\end{tikzpicture} & 62/2835 & 7936 & 496 \\
\hline
\begin{tikzpicture}[scale=0.6]
\draw [black, thick] (1,0)--(2,1)--(1,2)--(2,1)--(1,1);
\draw [black, thick] (2,1)--(3,1)--(4,2)--(3,3)--(4,2)--(5,3);
\draw [black, thick] (3,1)--(4,0)--(3,-1)--(4,0)--(5,-1);
\end{tikzpicture} & 113/2835 & 144640 & 9040 \\
\hline
\begin{tikzpicture}[scale=0.6]
\draw [black, thick] (1,0)--(2,1)--(1,2);
\draw [black, thick] (3,1)--(4,1);
\draw [black, thick] (2,1)--(3,1)--(4,2)--(3,3)--(4,2)--(5,3);
\draw [black, thick] (3,1)--(4,0)--(3,-1)--(4,0)--(5,-1);
\end{tikzpicture} & 452/14175 & 115712 & 7232 \\[0.5ex]
 \hline
\end{tabular}
\caption{Trees with at most 10 edges that are not path-trees.}
\label{table:2}
\end{table}

For the examples provided in the two tables, we used a code in Python, available at the address: \href{https://github.com/vodka4247/phylogenetic-degrees}{https://github.com/vodka4247/phylogenetic-degrees}, where we implemented the recurrence formula given in Theorem~\ref{v-splitting}.

\vspace{.2in}
\section*{Acknowledgement}
We thank Alexandra Vod\u a for helping us with Phython code implementations and Dumitru Stamate for carefully reading the paper.
RD was supported by the Alexander von Humboldt Foundation, and a grant of the Ministry of Research, Innovation and Digitization, CNCS - UEFISCDI, project number PN-III-P1-1.1-TE-2021-1633, within PNCDI III.
MV was supported by Slovak VEGA grant 1/0152/22.

\vspace{.2in}

\end{document}